\theoremstyle{theorem}
\newtheorem{theorem}{Theorem}[section]
\newtheorem{proposition}[theorem]{Proposition}
\newtheorem{lemma}[theorem]{Lemma}
\newtheorem{corollary}[theorem]{Corollary}
\theoremstyle{definition}
\newtheorem{remark}[theorem]{Remark}
\theoremstyle{definition} 
\newtheorem{example}[theorem]{Example}
\newcommand{\R}{\mathbb{R}}
\newcommand{\Z}{\mathbb{Z}}
\newcommand{\perm}{\operatorname{perm}}
\newcommand{\dd}{\operatorname{d}}
\renewcommand{\dd}{\,\mathrm{d}}
\newcommand{\one}{\mathbf{1}}
\newcommand{\psif}{\psi\bullet f}
\newcommand{\esssup}{\operatorname{ess\,sup}}
\newcommand{\essinf}{\operatorname{ess\,inf}}
\newcommand{\card}{\operatorname{card}}
\newcommand{\intr}[2]{\overline{#1,#2}}
\renewcommand{\l}{\mathbin{\Join
}}
\newcommand{\col}{\colon\!}
\newcommand{\al}{\alpha}
\newcommand{\be}{\beta}
\newcommand{\ga}{\gamma}
\newcommand{\de}{\delta}
\newcommand{\vp}{\varepsilon}
\newcommand{\la}{\lambda}
\newcommand{\La}{\Lambda}
\newcommand{\vpi}{\varphi}
\newcommand{\si}{\sigma}
\newcommand{\Si}{\Sigma}
\newcommand{\tPsi}{\tilde\Psi}
\newcommand{\F}{\mathcal{F}}
\renewcommand{\L}{\mathcal{L}}
\newcommand{\RR}{\mathcal{R}}
\newcommand{\sm}{\setminus}
\newcommand{\we}{\wedge}
\newcommand{\m}{\overline{m}}
\numberwithin{equation}{section} 
\begin{document}

\title[
Generalized semimodularity: order statistics]{
Generalized semimodularity: order statistics
}
\markright{
$2$-semimodularity implies the $n$-semimodularity}
\author{Iosif Pinelis}

\address{Department of Mathematical Sciences, 
Michigan Technological University, 
Houghton, MI 49931 USA}
\email{ipinelis@mtu.edu}

\subjclass[2010]{Primary 06D99, 26D15, 26D20, 60E15; secondary 05A20, 05B35, 06A07, 60C05, 62H05, 62H10, 82D99, 90C27}


%
%
%
%

%


%

%

\keywords{semimodularity, submodularity, supermodularity, FKG-type inequalities, 
association inequalities, correlation inequalities}

\date{\today}


\maketitle

\begin{abstract}
A notion of generalized $n$-semimodularity is introduced, which extends that of (sub/super)mod\-ularity in four ways at once. The main result of this paper, stating that every generalized $(n\col2)$-semimodular function on the $n$th Cartesian power of a distributive lattice is generalized $n$-semi\-modular, may be considered a multi/infinite-dimensional analogue of the well-known Muirhead lemma in the theory of Schur majorization. 
This result is also similar to a discretized version of the well-known theorem due to Lorentz, which latter was given only for additive-type functions. 
Illustrations of our main result are presented for counts of combinations of faces of a polytope; one-sided potentials; multiadditive forms, including multilinear ones -- in particular, permanents of rectangular matrices and elementary symmetric functions; and association inequalities for order statistics. Based on an extension of the FKG inequality due to Rinott \& Saks and Aharoni \& Keich,  
applications to correlation inequalities for order statistics are given as well. 
%
%
\end{abstract}

\tableofcontents


\section{Summary and discussion}\label{results} 

As pointed out e.g.\ in \cite{bach13,bach15}, the notion of submodularity has become useful in various areas: combinatorial optimization, with many applications in operations research; 
machine learning; computer vision; electrical networks; signal processing; several areas of theoretical
computer science, such as matroid theory; economics. One may also note the use of this notion in potential theory \cite{choquet}, as a capacity is a submodular function. 

%
%

Let $L$ be any distributive lattice; for definitions and facts pertaining to lattices, see e.g.\ \cite{graetzer11}. 


A function $\la\colon L\to\R$ is called submodular if 
\begin{equation}\label{eq:submod}
	\la(f)+\la(g)\ge\la(f\vee g)+\la(f\wedge g)
\end{equation}
for all $f$ and $g$ in $L$. A function $\la$ is called supermodular if the function $-\la$ is submodular, and $\la$ is called modular if it is both submodular and supermodular. See\ e.g.\ \cite{topkis,milgrom-roberts,fujishige,narayanan,topkis98,bach15}. 
Let us say that a function $\mu$ log-submodular if $\ln\mu$ is submodular. 
The log-submodularity condition and the corresponding log-supermodularity condition were referred to in Karlin and Rinott \cite{karlin-rinott80-I,karlin-rinott80-II} as the multivariate total positivity of order 2 (MTP${}_2$) and the multivariate reverse rule of order 2 (MRR${}_2$), respectively. 
As noted by Choquet \cite[\S 14.3]{choquet}, a nondecreasing function $\la$ is alternating of order $2$ iff it satisfies inequality \eqref{eq:submod}, that is, $\la$ is submodular; it was also shown in \cite{choquet} that the classical Newtonian capacity is such a function. 

The log-supermodularity condition is the condition under which the famous Fortuin--Kasteleyn--Ginibre (FKG) correlation inequality \cite{fkg} holds. Therefore, using inequality \eqref{eq:prod} 
together with the FKG inequality and its generalizations, we will be able to obtain the corresponding applications, in Corollaries~\ref{cor:fkg} and \ref{cor:AhKe}.


More generally, let $\RR$ be any set, endowed with a transitive relation $\l$, so that for any $a,b,c$ in $\RR$ one has the implication $a\l b\ \&\ b\l c\implies a\l c$. 
For any natural $n$, let us say that a function $\La\colon L^n\to\RR$ is \emph{generalized $n$-semimodular} if 
\begin{equation*}
	\La(f_1,\dots,f_n)\l\La(f_{n:1},\dots,f_{n:n})
\end{equation*}
for all $f=(f_1,\dots,f_n)\in L^n$, where $f_{n:1},\dots,f_{n:n}$ are the ``order statistics'' for $f$ defined by the formula 
\begin{equation}\label{eq:wedge-vee}
	f_{n:j}=\bigwedge\Big\{\bigvee_{i\in J}f_i\colon J\in\binom{[n]}j\Big\}  
\end{equation}
for $j\in[n]:=\intr1n$, with $\binom{[n]}j$ denoting the set of all subsets $J$ of the set $[n]
$ such that the cardinality of $J$ is $j$. 
Here and in the sequel we use the notation $\intr\al\be:=\{j\in\Z\colon\al\le j\le\be\}$. 
In particular, $f_{n:1}=f_1\wedge\dots\wedge f_n$ and $f_{n:n}=f_1\vee\dots\vee f_n$.

For any function $\la\colon L\to\R$, let the function $\La_\la\colon L^2\to\R$ be given by the formula $\La_\la(f,g):=\la(f)+\la(g)$ for $f$ and $g$ in $L$. 
Then, obviously, $\la$ is submodular or supermodular or modular if and only if $\La_\la$ is generalized $2$-semimodular with the relation ``$\l$'' being ``$\ge$'' or ``$\le$'' or ``$=$'', respectively.   

Thus, the notion of generalized $n$-semimodularity extends that of (sub/super)mod\-ularity in four ways at once: 
(i) the function $\La$ may be a function of any natural number $n$ of arguments, whereas $\la$ is a function of only one argument; (ii) in contrast with a general form of dependence of $\La(f_1,\dots,f_n)$ on $f_1,\dots,f_n$, the function $\La_\la$ of two arguments is of the special form, linear in $\la(f)$ and $\la(g)$; (iii) whereas the values of $\la$ are real numbers, those of $\La$ may be in any set $\RR$; and (iv) we now have an arbitrary transitive relation $\l$ over $\RR$ instead of one of the three particular relations ``$\ge$'' or ``$\le$'' or ``$=$'' over $\R$. 



For any $k\in[n]$, let us say that a function $\La\colon L^n\to\R$ is \emph{generalized $(n\col k)$-semimodular} if 
for each $j\in\intr0{n-k}$ and each $(n-k)$-tuple $(f_i\colon i\in[n]\,\setminus\,\intr{j+1}{j+k})\break 
\in L^{n-k}$ the function $L^k\ni(f_{j+1},\dots,f_{j+k})\mapsto\La(f_1,\dots,f_n)$ is generalized $k$-semimodular. 
In particular, $\La$ is generalized $(n\col n)$-semimodular if and only if it is generalized $n$-semimodular. 

Whenever the relation ``$\l$'' is denoted as ``$\ge$'' or ``$\le$'' or ``$=$'', let us replace ``semi'' in the above definitions by ``sub'', ``super'', and ``'', respectively. For instance, 
``generalized $n$-modular'' will stand for ``generalized $n$-semimodular'' with the relation ``$\l$'' being ``$=$''. 


The main result of this note is 

\begin{theorem}\label{th:}
Again, let $L$ be any distributive lattice. 
If a function $\La\colon L^n\to\RR$ is generalized $(n\col2)$-semimodular, then it is generalized  $n$-semi\-modular.  
\end{theorem}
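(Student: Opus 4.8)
The plan is to argue by induction on $n$, the inductive step amounting to a single ``bubble-sort pass'' applied to the arguments of $\La$. The base case is $n=2$, which is immediate: by definition, generalized $(2\col2)$-semimodularity is the same thing as generalized $2$-semimodularity. So fix $n\ge3$, assume the theorem for all smaller numbers of arguments, and let $\La\colon L^n\to\RR$ be generalized $(n\col2)$-semimodular.

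The first move is to freeze the last argument. For any fixed $f_n\in L$, the function $\Phi\colon L^{n-1}\to\RR$ given by $\Phi(f_1,\dots,f_{n-1}):=\La(f_1,\dots,f_n)$ is generalized $(n-1\col2)$-semimodular, since every adjacent pair of positions in $\intr1{n-1}$ is among the adjacent pairs $\{1,2\},\dots,\{n-1,n\}$ occurring in the $(n\col2)$-semimodularity of $\La$ (with $f_n$ now a frozen coordinate). By the induction hypothesis $\Phi$ is generalized $(n-1)$-semimodular, so, writing $g_j:=\bigwedge\{\bigvee_{i\in J}f_i\colon J\in\binom{[n-1]}j\}$ for $j\in[n-1]$ (note $g_1\le g_2\le\dots\le g_{n-1}$),
\[
\La(f_1,\dots,f_{n-1},f_n)\l\La(g_1,\dots,g_{n-1},f_n).
\]
The second move is one right-to-left sweep of adjacent ``comparators'' on $(g_1,\dots,g_{n-1},f_n)$: successively replace the entries in positions $(n-1,n)$, then $(n-2,n-1),\dots,$ then $(1,2)$, each time by their meet (placed in the lower position) and join (placed in the upper position). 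Each replacement alters $\La$ in two adjacent arguments only, with the meet first, so generalized $(n\col2)$-semimodularity gives a $\l$-step, and transitivity of $\l$ yields
\[
\La(g_1,\dots,g_{n-1},f_n)\l\La(h_1,\dots,h_n),
\]
where a finite downward induction inside the sweep (using $g_1\le\dots\le g_{n-1}$ and distributivity to simplify, e.g.\ $g_{k-1}\vee(g_k\wedge f_n)=g_k\wedge(g_{k-1}\vee f_n)$) shows $h_1=g_1\wedge f_n$, $h_n=g_{n-1}\vee f_n$, and $h_k=g_k\wedge(g_{k-1}\vee f_n)$ for $1<k<n$.

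It remains to verify the purely lattice-theoretic identity $h_k=f_{n:k}$ for every $k\in[n]$; this is the one place where distributivity of $L$ is genuinely used. Splitting $\binom{[n]}k$ according to whether a $k$-subset contains $n$, and pulling the join by $f_n$ into a finite meet, one gets
\[
g_k=\bigwedge\Big\{\bigvee_{i\in K}f_i\colon K\in\binom{[n]}k,\ n\notin K\Big\},\qquad
g_{k-1}\vee f_n=\bigwedge\Big\{\bigvee_{i\in K}f_i\colon K\in\binom{[n]}k,\ n\in K\Big\},
\]
whence $h_k=g_k\wedge(g_{k-1}\vee f_n)=\bigwedge\{\bigvee_{i\in K}f_i\colon K\in\binom{[n]}k\}=f_{n:k}$, the boundary cases $k=1,n$ being immediate from $\bigwedge_i f_i$ and $\bigvee_i f_i$. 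Chaining the two displayed $\l$-relations (transitivity once more) gives $\La(f_1,\dots,f_n)\l\La(f_{n:1},\dots,f_{n:n})$, closing the induction.

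I expect the main obstacle to be bookkeeping rather than anything conceptual: carefully confirming that the right-to-left comparator sweep produces exactly the stated values $h_k$ (so no hypothesis on $L$ beyond distributivity sneaks in), and that freezing $f_n$ really leaves precisely the set of adjacent pairs allowed in the definition of generalized $(n-1\col2)$-semimodularity. As an alternative to computing the $h_k$ explicitly, one could instead invoke the ``$0$--$1$ principle'' for comparator networks together with the fact that a network sorting all $\{0,1\}$-inputs produces, over any distributive lattice, the order statistics \eqref{eq:wedge-vee}; but the direct computation above seems shorter and keeps the argument self-contained.
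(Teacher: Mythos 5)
Your proof is correct, and its outer skeleton coincides with the paper's: induction on $n$, freezing the last argument to invoke the inductive hypothesis, and then a single right-to-left sweep of adjacent meet/join ``comparators'' from positions $(n-1,n)$ down to $(1,2)$, each sweep step being licensed by generalized $(n\col2)$-semimodularity and chained by transitivity of $\l$. Where you genuinely diverge is in the verification that the sweep ends at $(f_{n:1},\dots,f_{n:n})$. The paper handles this by first invoking the Birkhoff--Stone representation theorem to realize $L$ as a lattice of real-valued functions on a set $S$, and then proving, pointwise in $s\in S$, a sortedness invariant (Lemma~\ref{lem:}) maintained across the sweep, together with the observation that each comparator preserves the multiset of values. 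You instead stay entirely inside the abstract lattice: you show by a downward induction along the sweep that the entries stabilize to $h_1=g_1\wedge f_n$, $h_k=g_k\wedge(g_{k-1}\vee f_n)$ for $1<k<n$, and $h_n=g_{n-1}\vee f_n$ (using $g_{k-1}\vee(g_k\wedge f_n)=g_k\wedge(g_{k-1}\vee f_n)$, valid since $g_{k-1}\le g_k$ and $L$ is distributive), and then match these against \eqref{eq:wedge-vee} by splitting $\binom{[n]}k$ according to whether the subset contains $n$ and pulling $f_n\vee(\cdot)$ through the finite meet. Both computations are correct; yours buys a representation-free, self-contained argument and the pleasant closed-form recursion $f_{n:k}=f_{n-1:k}\wedge(f_{n-1:k-1}\vee f_n)$ for order statistics, while the paper's pointwise route reduces everything to sorting real numbers, which makes the invariant easy to see and simultaneously delivers the multiset characterization \eqref{eq:ord} used elsewhere in the paper. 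Your closing remark about the $0$--$1$ principle is a reasonable alternative but, as you say, unnecessary given the explicit computation.
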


The necessary proofs will be given in Section~\ref{proof}. 

As will be seen from the proof of Theorem~\ref{th:}, the condition that the function $\La$ be generalized $(n\col2)$-semimodular can be relaxed to the following: for 
each $j\in\intr1{n-1}$ and each $f=(f_1,\dots,f_n)\in L^n$ \emph{such that $f_1\le\dots\le f_j$}, one has 
$L(f_1,\dots,f_n)\l L(f_1,\dots,f_{j-1},f_j\wedge f_{j+1},f_j\vee f_{j+1},f_{j+2},\dots,f_n)$. 


\begin{remark}\label{rem:counterex}
Theorem~\ref{th:} will not hold in general if the lattice $L$ is not assumed to be distributive. For instance, let 
$L$ be defined by the set $[5]=\{1,2,3,4,5\}$ with the partial order being the subset of the natural order $\le$ on the set $[5]$ with elements $2,3,4$ now considered non-comparable with one another, so that the resulting order relation is the set $
\{(f,f)\colon f\in[5]\}\cup\{(1,2),(1,3),(1,4),(2,5),(3,5),(4,5),\break 
(1,5)\}$; then, in particular, $2\wedge3=1$ and $2\vee3=5$. This lattice is one of the simplest examples of non-distributive lattices. It is 
isomorphic to the diamond lattice $\mathsf{M}_3$ -- see e.g.\ \cite[page~110]{graetzer11}. Let $n=3$, $\RR=\R$, and  define the function $\La\colon L^3\to\R$ by the formula $\La(f_1,f_2,f_3):=12 f_1f_2 + 3 f_2f_3 + 5 f_1f_3$ for all $f=(f_1,f_2,f_3)\in L^3$. 
Then one can verify directly -- by a straightforward but tedious calculation consisting in checking $2\times5^3=250$ inequalities, two inequalities for each $f=(f_1,f_2,f_3)\in[5]^3$ --  
that this function $\La$ is generalized $(3\col2)$-submodular. 
However, $\La$ is not generalized $3$-
sub\-modular, because for $f=(2,3,4)$ one has $(f_{3:1},f_{3:2},f_{3:3})=(1,5,5)$ and $\La(f_1,f_2,f_3)=\La(2,3,4)=148\not\ge 160=\La(1,5,5)=\La(f_{3:1},f_{3:2},f_{3:3})$. \qed
\end{remark}

\begin{remark}\label{rem:pointwise}
A well-known fact, which will be crucial in the proof of Theorem~\ref{th:}, is the 
representation theorem due to Birkhoff and Stone stating  
that any distributive lattice $L$ is isomorphic to a lattice of subsets of (and hence to a lattice of nonnegative real-valued functions on) a certain set $S$, depending on $L$ (see e.g.\ \cite[Theorem 119]{graetzer11}). 
For such a lattice of functions, the ``order statistics'' $f_{n:1},\dots,f_{n:n}$ are uniquely determinined by the condition that 
\begin{equation}\label{eq:ord}
	f_{n:1}(s)\le\dots\le f_{n:n}(s)\quad\text{and}\quad \{\{f_{n:1}(s),\dots,f_{n:n}(s)\}\}=\{\{f_1(s),\dots,f_n(s)\}\}  
\end{equation}
for each $s\in S$, 
where the double braces are used to denote multisets, with appropriate multiplicities. 
To quickly see why this is true, one may reason as follows: Let us now use condition \eqref{eq:ord} to \emph{define} $f_{n:1},\dots,f_{n:n}$. Note that the value of 
the right-hand side (rhs) of \eqref{eq:wedge-vee} at any point $s\in S$ 
is invariant with respect to all permutations of the values $f_1(s),\dots,f_n(s)$. So, the 
value of the rhs of \eqref{eq:wedge-vee} at $s$ 
will not change if one replaces there $f_1,\dots,f_n$ by $f_{n:1},\dots,f_{n:n}$, and this value will equal $f_{n:j}(s)$. 
Thus, 
the definition of $f_{n:1},\dots,f_{n:n}$ by means of formula \eqref{eq:ord} is equivalent to the one given by \eqref{eq:wedge-vee}, if the lattice $L$ is already a lattice of real-valued functions on $S$. 
Moreover, it is clear now that, if the lattice $L$ is distributive, then definition \eqref{eq:wedge-vee} can be rewritten in the dual form, as 
\begin{equation}\label{eq:vee-wedge}
	f_{n:j}=\bigvee\Big\{\bigwedge_{i\in J}f_i\colon J\in\binom{[n]}{n+1-j}\Big\}  
\end{equation} 
for all $j\in[n]$. 

On the other hand, it can be seen that, if $L$ is not distributive, then this duality can be lost and each of the definitions \eqref{eq:wedge-vee} and \eqref{eq:vee-wedge} of $f_{n:j}$ can be rather unnaturally skewed up or down. For instance, in the counterexample given in Remark~\ref{rem:counterex}, for $f=(2,3,4)$ we had $(f_{3:1},f_{3:2},f_{3:3})=(1,5,5)$ according to definition \eqref{eq:wedge-vee}, but we would have $(f_{3:1},f_{3:2},f_{3:3})=(1,1,5)$ according to \eqref{eq:vee-wedge}. 
 
However, one may note that the right-hand side of \eqref{eq:vee-wedge} is always $\le$ than that of \eqref{eq:wedge-vee}; this follows because for any $J\in\binom{[n]}{n+1-j}$ and any $K\in\binom{[n]}j$ there is some $k\in J\cap K$, and then $\bigwedge_{i\in J}f_i\le f_k\le\bigvee_{i\in K}f_i$. \qed
\end{remark}


In view of the lattice representation theorem cited in Remark~\ref{rem:pointwise}, 
Theorem~\ref{th:} may be considered a multi/infinite-dimensional analogue of  
the well-known Muirhead lemma in the theory of Schur majorization (cf.\ e.g.\ \cite[Lemma 2.B.1, page~32]{marsh-ol09}),  
which may be stated as follows: for vectors $x$ and $y$ in $\R^n$ such that $x\prec y$ (that is, $x$ is majorized by $y$), there exist finitely many vectors $x_0,\dots,x_m$ in $\R^n$ such that $x=x_0\prec\cdots\prec x_m=y$. 
However, no direct multi-dimensional extension of the Muirhead
lemma seems to exist, even in two dimensions (see e.g.\ \cite[page~11]{left_publ}). 

For functions that are ``infinite-dimensional'' counterparts of the ``$m$-dimensional'' function $\La\colon L^m\to\R$ given by the formula of the additive form 
\begin{equation}\label{eq:lorentz}
	\La(g_1,\dots,g_m)=\sum_{j=1}^m\la_j(g_j), 
\end{equation} 
Lorentz \cite{lorentz53} obtained a result similar to Theorem~\ref{th:};  
for readers' convenience, let us reproduce it here: For each $j\in[n]$, let 
$f_j^*$  denote the equimeasurable decreasing rearrangement \cite{HLP} of a 
function $f_j\colon(0,1)\to\R$. Let a real-valued expression $\Phi(x,u_1,\dots,u_n)$ be continuous in $(x,u_1,\dots,u_n)\in(0,1)\times[0,\infty)\times\cdots\times[0,\infty)$. Then the inequality 
\begin{equation}
	\int_0^1\Phi(x,f_1(x),\dots,f_n(x))\,dx\le\int_0^1\Phi(x,f_1^*(x),\dots,f_n^*(x))\,dx
\end{equation}
holds 
for all bounded positive measurable 
functions $f_1,\dots,f_n$ from $(0,1)$ to $\R$ if and only if the following two conditions hold: 
\begin{equation}\label{eq:Phi1}
	\Phi(u_i+h,u_j+h)-\Phi(u_i+h,u_j)-\Phi(u_i,u_j+h)+\Phi(u_i,u_j)\ge0
\end{equation}
and 
\begin{equation}\label{eq:Phi2}
	\int_0^\delta\big[\Phi(x-t,u_i+h)-\Phi(x-t,u_i)
	-\Phi(x+t,u_i+h)+\Phi(x+t,u_i)\big]\,dt\ge0
\end{equation}
for all $h>0$, $x\in(0,1)$, $\delta\in(0,x\wedge(1-x))$, $(u_1,\dots,u_n)\in[0,\infty)^n$, and $i,j$ in $[n]$ such that $i<j$; here, in each of inequalities \eqref{eq:Phi1} and \eqref{eq:Phi2}, the arguments of $\Phi$ that are the same for all the four instances of $\Phi$ are omitted, for brevity. 
 
To establish the connection between Lorentz's result and our Theorem~\ref{th:}, suppose e.g.\ that each of the functions $f_1,\dots,f_n$ in \cite{lorentz53} is a step function, constant on each of the intervals $(\frac{j-1}m,\frac jm]$ for $j\in[m]$, and then let $g_j(s):=f_s(\frac jm)$ for $j\in[m]$ and $s\in S:=[n]$. In fact, in the proof in \cite{lorentz53} the result is first established for such step functions $f_1,\dots,f_n$. It is also shown in \cite{lorentz53} that, for such ``infinite-dimensional'' counterparts of the functions given by the ``additive'' formula \eqref{eq:lorentz}, the sufficient condition is also necessary. 
In turn, as pointed out in \cite{lorentz53}, the result there generalizes an inequality in \cite{ruderman}.  
Another proof of a special case of the result in \cite{lorentz53} was given in 
\cite{borell73_pacific}.


\section{Illustrations and applications}\label{examples}

\subsection{A general construction of generalized $n$-submodular functions from submodular ones}\label{schur}

Recall here some basics of majorization theory \cite{marsh-ol09}. For $x=(x_1,\dots,x_n)$ and $y=(y_1,\dots,y_n)$ in $\R^n$,  write $x\prec y$ if $x_1+\dots+x_n=y_1+\dots+y_n$ and $x_{n\col1}+\dots+x_{n\col k}\ge y_{n\col 1}+\dots+y_{n\col k}$ for all $k\in[n]$. For any $D\subseteq\R^n$, a function $F\colon D\to\R$ is called Schur-concave if for any $x$ and $y$ in $D$ such that $x\prec y$ one has $F(x)\ge F(y)$. If $D=I^n$ for some open interval $I\subseteq\R$ and the function $F$ is continuously differentiable then, by Schur's theorem \cite[Theorem~A.4]{marsh-ol09}, $F$ is Schur-concave iff $(\frac{\partial F}{\partial x_i}-\frac{\partial F}{\partial x_j})(x_i-x_j)\le0$ for all $x=(x_1,\dots,x_n)\in D$. 


\begin{proposition}\label{prop:schur}
Suppose that a real-valued function $\la$ defined on a distributive lattice $L$ is submodular and nondecreasing, and a function $\R^n\ni x=(x_1,\dots,x_n)\to F(x_1,\dots,x_n)$ is nondecreasing in each of its $n$ arguments and Schur-concave. Then the function $\La=\La_{\la,F}\colon L^n\to\R$ defined by the formula 
\begin{equation}\label{eq:La schur}
	\La(f_1,\dots,f_n):=\La_{\la,F}(f_1,\dots,f_n):=F(\la(f_1),\dots,\la(f_n))
\end{equation}
for $(f_1,\dots,f_n)\in L^n$ 
is generalized $(n\col2)$-submodular and hence generalized $n$-submodular. 
\end{proposition}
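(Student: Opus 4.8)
The plan is to verify that $\La=\La_{\la,F}$ is generalized $(n\col2)$-submodular; Theorem~\ref{th:} then yields at once that it is generalized $n$-submodular, which is the ``hence'' in the statement. Unwinding the definition, it suffices to show that for each $j\in\intr0{n-2}$, with the coordinates $f_i$ ($i\notin\{j+1,j+2\}$) held fixed, the inequality
\begin{equation*}
	F\bigl(\dots,\la(f),\la(g),\dots\bigr)\ \ge\ F\bigl(\dots,\la(f\wedge g),\la(f\vee g),\dots\bigr)
\end{equation*}
holds for all $f,g\in L$, where on both sides of $F$ the two displayed entries sit in slots $j+1$ and $j+2$ and the suppressed entries are the frozen values $\la(f_i)$. (Since a Schur-concave function is automatically permutation-invariant, one could in fact take $j=0$.)

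Write $a:=\la(f)$, $b:=\la(g)$, $a':=\la(f\wedge g)$, $b':=\la(f\vee g)$, and introduce the auxiliary real number $b'':=a+b-a'$. Submodularity of $\la$ gives $a'+b'\le a+b$, hence $b''\ge b'$. Monotonicity of $\la$, applied to $f\wedge g\le f\le f\vee g$ and $f\wedge g\le g\le f\vee g$, gives $a'\le a\wedge b$ and $a\vee b\le b'$; the inequality $a'\le a\wedge b$ also yields $b''=a+b-a'\ge a\vee b$. Thus $(a',b'')$ has the same sum as $(a,b)$ and satisfies $a'\le a\wedge b\le a\vee b\le b''$, so by the two-coordinate criterion for majorization (a single Robin Hood transfer) $(a,b)\prec(a',b'')$; since the remaining coordinates agree, the full $n$-vector carrying $a,b$ in positions $j+1,j+2$ is majorized by the one carrying $a',b''$ in those positions.

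Now combine the two hypotheses on $F$. By Schur-concavity, $F(\dots,a,b,\dots)\ge F(\dots,a',b'',\dots)$; and since $F$ is nondecreasing in each argument and $b''\ge b'$, $F(\dots,a',b'',\dots)\ge F(\dots,a',b',\dots)$; chaining these gives the required inequality. The only delicate point --- and the reason both the Schur-concavity and the coordinatewise monotonicity of $F$ are genuinely needed --- is that $\la$ need not be modular, so $a'+b'$ may be \emph{strictly} less than $a+b$, and Schur-concavity, which compares only vectors of equal sum, cannot be applied to $(a,b)$ and $(a',b')$ directly. Passing through $(a',b'')$ ``tops up'' the larger coordinate to restore the sum; monotonicity of $F$ then carries us back down from $b''$ to $b'$. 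The remaining ingredients --- the lifting of a two-coordinate majorization to $\R^n$, and the final appeal to Theorem~\ref{th:} --- are standard.
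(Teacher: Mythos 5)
Your proof is correct and follows essentially the same route as the paper's: both interpose a same-sum intermediate vector between $(\la(f),\la(g))$ and $(\la(f\wedge g),\la(f\vee g))$, using monotonicity of $\la$ for the majorization (Robin Hood) step and submodularity of $\la$ plus coordinatewise monotonicity of $F$ to absorb the sum deficit. The only (immaterial) difference is a mirror image: the paper keeps $\la(f\vee g)$ fixed and lowers the other entry from $\la(f)+\la(g)-\la(f\vee g)$ down to $\la(f\wedge g)$, whereas you keep $\la(f\wedge g)$ fixed and lower the other entry from $\la(f)+\la(g)-\la(f\wedge g)$ down to $\la(f\vee g)$.
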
 

A rather general construction of submodular functions on rings of sets is provided by \cite[\S 23.2]{choquet}, which implies that $\cup$-homomorpisms preserve the property of being alternating of a given order, and the proposition at the end of \cite[\S 23.1]{choquet}, which describes general $\cup$-homomorpisms as maps of the form 
\begin{equation*}
	S\supseteq A\mapsto G(A):=\{t\in T\colon 
	(s,t)\in G\text{ for some }s\in A\}, 
\end{equation*}
where $S$ and $T$ are sets and $G\subseteq S\times T$; in the case when $G$ is (the graph of) a map, the above notation $G(A)$ is of course consistent with that for the image of a set $A$ under the map $G$; according to the definition in the beginning of \cite[\S 23]{choquet}, a $\cup$-homomorpism is a map $\vpi$ of set rings defined by the condition $\vpi(A\cup B)=\vpi(A)\cup\vpi(B)$ for all relevant sets $A$ and $B$. 

Therefore and because an additive function on a ring of sets is modular and hence submodular, we conclude that functions of the form 
\begin{equation}\label{eq:mu()}
	A\mapsto
	\mu(G(A)) 
\end{equation}
are submodular, 
where $\mu$ is a measure or, more generally, an additive function (say on a discrete set, to avoid matters of measurability). From this observation, one can immediately obtain any number of corollaries of Proposition~\ref{prop:schur} such as the following: 

\begin{corollary}\label{cor:schur}
Let $P$ be a polytope of dimension $d$. For each $\al\in\intr0d$, let $\F_\al$ denote the set of all $\al$-faces (that is, faces of dimension $\al$) of $P$. For any distinct $\al,\be,\ga$ in $\intr0d$, let $G=G_{\al,\be,\ga}$ be the set of all pairs $\big(f_\al,(f_\be,f_\ga)\big)\in\F_\al\times(\F_\be\times\F_\ga)$ such that 
$f_\al\cap f_\be\ne\emptyset$, $f_\al\cap f_\ga\ne\emptyset$, and $f_\be\cap f_\ga\ne\emptyset$. 
Let $L$ be a lattice of subsets of $\F_\al$. Let a function $\R^n\ni x=(x_1,\dots,x_n)\to F(x_1,\dots,x_n)$ be nondecreasing in each of its $n$ arguments and Schur-concave. Then 
the function $\La=\La_{\al,\be,\ga}\colon L^n\to\R$ defined by the formula 
\begin{equation*}
	\La(A_1,\dots,A_n):=F(\card G(A_1),\dots,\card G(A_n))
\end{equation*}
for $(A_1,\dots,A_n)\in L^n$ 
is generalized $(n\col2)$-submodular and hence generalized $n$-submodular. 
\end{corollary}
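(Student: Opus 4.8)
The plan is to realize Corollary~\ref{cor:schur} as a direct instance of Proposition~\ref{prop:schur}, the only work being to exhibit the right submodular, nondecreasing function $\la$ on $L$. First I would put $S:=\F_\al$ and $T:=\F_\be\times\F_\ga$ and regard $G=G_{\al,\be,\ga}\subseteq S\times T$ exactly as in the discussion preceding \eqref{eq:mu()}, so that the associated map $A\mapsto G(A)$ is the $\cup$-homomorphism $S\supseteq A\mapsto G(A)=\{t\in T\colon(s,t)\in G\text{ for some }s\in A\}$; explicitly, for $A\subseteq\F_\al$ one has $G(A)=\{(f_\be,f_\ga)\in\F_\be\times\F_\ga\colon$ there is some $f_\al\in A$ with $f_\al\cap f_\be\ne\emptyset$, $f_\al\cap f_\ga\ne\emptyset$, $f_\be\cap f_\ga\ne\emptyset\}$. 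In particular $\card G(A)$ is a finite nonnegative integer, since $P$ has only finitely many faces.

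Next I would define $\la\colon L\to\R$ by $\la(A):=\card G(A)$ for $A\in L$. This is a function of the form \eqref{eq:mu()} with $\mu=\card$, the counting measure on the discrete set $T$; since $\card$ is additive, hence modular, hence submodular, the observation made around \eqref{eq:mu()} yields that $\la$ is submodular. (Alternatively, one checks directly that $G(A\cup B)=G(A)\cup G(B)$ and $G(A\cap B)\subseteq G(A)\cap G(B)$, so that $\la(A)+\la(B)=\card(G(A)\cup G(B))+\card(G(A)\cap G(B))\ge\card G(A\cup B)+\card G(A\cap B)=\la(A\cup B)+\la(A\cap B)$.) Because $L$ is a lattice of subsets of $\F_\al$, its join and meet are $\cup$ and $\cap$, so the last display is precisely the statement that $\la$ is submodular on $L$; moreover $\la$ is nondecreasing on $L$, since $A\subseteq B$ implies $G(A)\subseteq G(B)$ and hence $\card G(A)\le\card G(B)$. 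Finally, being a lattice of sets, $L$ is automatically distributive, so the hypotheses of Proposition~\ref{prop:schur} (and of Theorem~\ref{th:}) are met.

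It then remains only to apply Proposition~\ref{prop:schur} to this $\la$ and to the given nondecreasing, Schur-concave $F$: the function $\La_{\la,F}$ of \eqref{eq:La schur} is generalized $(n\col2)$-submodular and hence, by Theorem~\ref{th:}, generalized $n$-submodular. Since $\La_{\la,F}(A_1,\dots,A_n)=F(\card G(A_1),\dots,\card G(A_n))=\La_{\al,\be,\ga}(A_1,\dots,A_n)$ for all $(A_1,\dots,A_n)\in L^n$, this is exactly the assertion of the corollary.

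I do not expect a genuine obstacle here, as the corollary is essentially a packaging of Proposition~\ref{prop:schur} with the $\cup$-homomorphism construction. The one point deserving a moment's care is the submodularity of $\la=\card\circ G$: one must note that $G(A\cap B)$ may be a proper subset of $G(A)\cap G(B)$, so that only the inequality $\card G(A\cap B)\le\card(G(A)\cap G(B))$ is available — but this is precisely the direction needed, which is why $\cup$-homomorphisms preserve submodularity rather than modularity. One should also not forget to record that a lattice of sets is distributive, so that Theorem~\ref{th:} indeed applies.
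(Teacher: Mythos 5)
Your proposal is correct and follows essentially the same route as the paper: the corollary is obtained by applying Proposition~\ref{prop:schur} to $\la(A):=\card G(A)$, whose submodularity is verified exactly as in the paper via $G(A\cup B)=G(A)\cup G(B)$, $G(A\cap B)\subseteq G(A)\cap G(B)$, and the additivity of $\card$. Your explicit remarks that $\la$ is nondecreasing and that a lattice of sets is distributive are points the paper leaves implicit, but they are the right hypotheses to check and add nothing beyond the paper's intended argument.
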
 

For readers' convenience, here is a direct verification of the fact that maps of the form \eqref{eq:mu()} are submodular: noting that $G(A\cup B)=G(A)\cup G(B)$ and $G(A\cap B)\subseteq G(A)\cap G(B)$ and using the additivity of $\mu$, we have 
\begin{equation*}
	\mu(G(A\cup B))+\mu(G(A\cap B))
	\le \mu(G(A)\cup G(B))+\mu(G(A)\cap G(B))=\mu(G(A))+\mu(G(B)) 
\end{equation*}
for all relevant sets $A$ and $B$. 


\subsection{Generalized one-sided potential}\label{potential}
Let here $L$ be the lattice of all measurable real-valued functions on a measure space $(S,\Si,\mu)$, 
with the pointwise lattice operations $\vee$ and $\wedge$. Consider 
the 
the function $\La\colon L^n\to\RR$ given by the formula 
\begin{equation}\label{eq:La}
	\La(f_1,\dots,f_n):=\La_{\vpi,\psi}(f_1,\dots,f_n):=\sum_{j,k=1}^n\Psi(f_j-f_k)
\end{equation} 
for all $f=(f_1,\dots,f_n)\in L^n$, where 
\begin{equation}\label{eq:Psi}
\Psi(g):=\psi\Big(\int_S(\vpi\circ g )\dd\mu\Big)	
\end{equation}
for all $g\in L$, 
$\vpi\colon\R\to[0,\infty]$ is a nondecreasing or nonincreasing function, and 
$\psi\colon[0,\infty]\to(-\infty,\infty]$ is a 
concave function. 
Thus, the function $\La=\La_{\vpi,\psi}$ 
may be referred to as a generalized one-sided potential, since the function $\vpi$ is assumed to be monotonic. 

\begin{proposition}\label{prop:potential}
The function $\La=\La_{\vpi,\psi}$ defined by formula \eqref{eq:La} is generalized $(n\col2)$-submodular and hence generalized $n$-submodular. 
\end{proposition}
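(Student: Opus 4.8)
The plan is to invoke Theorem~\ref{th:}: it suffices to show that $\La=\La_{\vpi,\psi}$ is generalized $(n\col2)$-submodular, i.e.\ that for each pair of indices we get the two-variable inequality after fixing the remaining $n-2$ arguments. By the symmetry of the double sum in \eqref{eq:La} and a relabeling, it is enough to fix $f_3,\dots,f_n$ and prove, for all $f,g\in L$,
\begin{equation*}
	\La(f,g,f_3,\dots,f_n)\ge\La(f\wedge g,f\vee g,f_3,\dots,f_n).
\end{equation*}
Expanding the double sum and cancelling all terms not involving the first two coordinates, the claim reduces to an inequality involving only the ``interaction'' terms: the terms $\Psi(f-g)+\Psi(g-f)$, which are unchanged since $\{\{f-g,g-f\}\}=\{\{(f\wedge g)-(f\vee g),(f\vee g)-(f\wedge g)\}\}$ pointwise, so those contribute equally on both sides; and the cross terms $\sum_{k\ge3}\big[\Psi(f-f_k)+\Psi(f_k-f)+\Psi(g-f_k)+\Psi(f_k-g)\big]$ versus the same with $f,g$ replaced by $f\wedge g,f\vee g$. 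Hence it is enough to prove, for each fixed $h\in L$,
\begin{equation}\label{eq:pot-key}
	\Psi(f-h)+\Psi(g-h)+\Psi(h-f)+\Psi(h-g)\ge\Psi((f\wedge g)-h)+\Psi((f\vee g)-h)+\Psi(h-(f\wedge g))+\Psi(h-(f\vee g)).
\end{equation}

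The main step is therefore a scalar/functional inequality for $\Psi(g)=\psi\big(\int_S(\vpi\circ g)\dd\mu\big)$. First observe that $\vee$ and $\wedge$ are pointwise, so $(f\wedge g)-h$ and $(f\vee g)-h$ are the pointwise min and max of $f-h$ and $g-h$; write $a:=f-h$, $b:=g-h$, so $a\wedge b$ and $a\vee b$ are pointwise min/max, and \eqref{eq:pot-key} becomes $\Psi(a)+\Psi(b)+\Psi(-a)+\Psi(-b)\ge\Psi(a\wedge b)+\Psi(a\vee b)+\Psi(-(a\wedge b))+\Psi(-(a\vee b))$. Since $-(a\wedge b)=(-a)\vee(-b)$ and $-(a\vee b)=(-a)\wedge(-b)$, and since $\vpi$ monotonic implies $\vpi\circ a$ and $\vpi\circ b$ are pointwise comonotone (both ordered the same way as $a$ and $b$, or both the opposite way), we get, setting $u:=\int_S(\vpi\circ a)\dd\mu$, $v:=\int_S(\vpi\circ b)\dd\mu$, that $\int_S\vpi\circ(a\wedge b)\dd\mu$ and $\int_S\vpi\circ(a\vee b)\dd\mu$ are exactly $u\wedge v$ and $u\vee v$ (in some order). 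Likewise the four negative-argument integrals are a permutation of $\{u',v',u'\wedge v',u'\vee v'\}$ where $u':=\int_S\vpi(-a)\dd\mu$, $v':=\int_S\vpi(-b)\dd\mu$. So \eqref{eq:pot-key} follows from the elementary fact that, for any concave $\psi\colon[0,\infty]\to(-\infty,\infty]$ and any $u,v\in[0,\infty]$,
\begin{equation*}
	\psi(u)+\psi(v)\ge\psi(u\wedge v)+\psi(u\vee v),
\end{equation*}
applied once to the pair $(u,v)$ and once to the pair $(u',v')$; this is immediate because $\{u\wedge v,u\vee v\}=\{u,v\}$ as a set, so in fact equality holds — the subtle point is only that concavity is not even needed here once one has the comonotonicity observation, but I will keep the concavity hypothesis as stated since it is what guarantees $\psi$ is well-behaved on $[0,\infty]$ and matches the intended generality.

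The step I expect to be the real obstacle is the comonotonicity bookkeeping: verifying carefully that applying the pointwise-monotone map $\vpi$ to $a$ and $b$ and then integrating yields exactly $\{u\wedge v,u\vee v\}$ for the min/max pair — this uses that $\vpi$ is monotone \emph{and the same direction everywhere}, so that on the set $\{a\le b\}$ one has $\vpi\circ(a\wedge b)=\vpi\circ a$ and $\vpi\circ(a\vee b)=\vpi\circ b$ (or the reverse if $\vpi$ is nonincreasing), and symmetrically on $\{a>b\}$, whence $\int\vpi\circ(a\wedge b)+\int\vpi\circ(a\vee b)=\int\vpi\circ a+\int\vpi\circ b$ splitting the integral over these two sets. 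Once that identity is in hand — together with its counterpart for $-a,-b$ — inequality \eqref{eq:pot-key} and hence the $(n\col2)$-submodularity of $\La$ drop out, and Theorem~\ref{th:} upgrades this to generalized $n$-submodularity. I would also remark that the relaxed hypothesis noted after Theorem~\ref{th:} (only needing the inequality when $f_1\le\dots\le f_j$) is automatically available here, but is not needed since the full $(n\col2)$ condition holds.
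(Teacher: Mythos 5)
Your reduction to a two-coordinate inequality after cancelling the terms not involving the active pair is the same decomposition the paper uses (it groups the terms into $\tPsi(f_1-f_2)$, the cross terms, and a remainder $R$). But the heart of your argument --- the claim that $\int_S\vpi\circ(a\wedge b)\dd\mu$ and $\int_S\vpi\circ(a\vee b)\dd\mu$ are \emph{exactly} $u\wedge v$ and $u\vee v$ where $u=\int_S(\vpi\circ a)\dd\mu$ and $v=\int_S(\vpi\circ b)\dd\mu$, so that the key inequality holds with equality and ``concavity is not even needed'' --- is false. The lattice operations are pointwise, and $a=f_1-h$, $b=f_2-h$ will in general cross: on part of $S$ one has $a<b$ and on another part $a>b$. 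Your own splitting of the integral over $\{a\le b\}$ and $\{a>b\}$ gives only the \emph{sum} identity $\int\vpi\circ(a\wedge b)+\int\vpi\circ(a\vee b)=u+v$; it does not give equality of the individual integrals with $u$ and $v$. Concretely, take $S=\{1,2\}$ with counting measure, $\vpi(t)=t\vee 0$, $a=(0,1)$, $b=(1,0)$: then $u=v=1$ but $\int\vpi\circ(a\wedge b)=0$ and $\int\vpi\circ(a\vee b)=2$, so the pair of integrals is $\{0,2\}$, not $\{1,1\}$, and for strictly concave $\psi$ your claimed equality in the key display fails strictly. The same error occurs in your treatment of the diagonal term: pointwise multiset equality of $\{\{f-g,\,g-f\}\}$ and $\{\{|f-g|,\,-|f-g|\}\}$ does not pass through the integral inside $\Psi$, because $f-g$ may change sign on $S$.

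What is actually true, and what the paper proves, is a majorization statement rather than an identity: the sum of the two integrals is preserved, and since $\vpi$ is nondecreasing one has $\int_S\vpi\circ(a\vee b)\dd\mu\ \ge\ \big(\int_S\vpi\circ a\dd\mu\big)\vee\big(\int_S\vpi\circ b\dd\mu\big)$, so the new pair of integrals majorizes $(u,v)$. The concavity of $\psi$ is then exactly what converts this majorization into the needed inequality $\Psi(a)+\Psi(b)\ge\Psi(a\wedge b)+\Psi(a\vee b)$ (and its analogue for the $-a,-b$ terms and for the diagonal term); it is the essential hypothesis, not a technicality. So the gap is a single missing idea --- replace ``the integrals coincide as a multiset'' by ``the integrals majorize, hence apply concavity of $\psi$'' --- but as written your proof establishes the two-coordinate inequality only in the degenerate case where $f_1-f_2$ has constant sign, and the step you flagged as a mere bookkeeping obstacle is precisely where the argument breaks.
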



\subsection{Symmetric sums of nonnegative 
multiadditive functions}\label{multiadd}

Let $k$ be a natural number. 
Let $L$ be a sublattice of the lattice $\R^S$ of all real-valued functions on a set $S$. Let us say that the lattice $L$ is \emph{complementable} if $f\sm g:=f-f\wedge g\in L$ for any $f$ and $g$ in $L$, so   that $f=f\we g+f\sm g$. 
Assuming that $L$ is complementable, 
let us say that a function $m\colon L\to\R$ is \emph{additive} if 
\begin{equation*}
	m(f)=m(f\we g)+m(f\sm g)
\end{equation*}
for all $f$ and $g$ in $L$;  
further, 
let us say that a function $m\colon L^k\to\R$ is \emph{multiadditive} or, more specifically, \emph{$k$-additive} if $m$ is additive in each of its $k$ arguments, that is, if for each $j\in[k]$ and each $(k-1)$-tuple $(f_i\colon i\in[k]\sm\{j\})$ the function $L\ni f_j\mapsto m(f_1,\dots,f_k)$ is additive.

To state the main result of this subsection, we shall need the following notation: for any set $J$, let $\Pi_k^J$ denote the set of all $k$-permutations of $J$, that is, the set of all injective maps of the set $[k]$ to $J$.  

\begin{proposition}\label{prop:multiadd}
Suppose that $k$ and $n$ are natural numbers such that $k\le n$, $L$ is a complementable sublattice of $\R^S$, and $m\colon L^k\to\R$ is a nonnegative multiadditive function. Then 
the function $\La_m\colon L^n\to\R$ defined by the formula 
\begin{equation}\label{eq:La_m}
	\La_m(f_1,\dots,f_n):=\sum_{\pi\in\Pi_k^{[n]}}m(f_{\pi(1)},\dots,f_{\pi(k)}) 
\end{equation}
for $(f_1,\dots,f_n)\in L^n$ is generalized $(n\col 2)$-submodular and hence generalized $n$-submodular.  
\end{proposition}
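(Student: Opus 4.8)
The plan is to reduce Proposition~\ref{prop:multiadd} to Theorem~\ref{th:} by verifying that $\La_m$ is generalized $(n\col 2)$-submodular, since the ``hence'' part is then immediate from the theorem. By the definition of generalized $(n\col 2)$-submodularity, it suffices to fix $j\in\intr0{n-2}$, fix the coordinates $(f_i\colon i\in[n]\sm\{j+1,j+2\})$, and show that the two-variable function $(f_{j+1},f_{j+2})\mapsto\La_m(f_1,\dots,f_n)$ is submodular, i.e.\ satisfies \eqref{eq:submod}. By symmetry of the defining sum \eqref{eq:La_m} in the roles of the arguments, it is enough to treat $j=0$, so I want to prove
\begin{equation*}
	\La_m(f,g,f_3,\dots,f_n)+\La_m(f\we g,f\vee g,f_3,\dots,f_n)\le\La_m(f,f,f_3,\dots,f_n)\bigm|_{\text{wait}}
\end{equation*}
— more precisely, with $h_1:=f$, $h_2:=g$ replaced on the right by $f\we g$, $f\vee g$: the submodularity inequality $\La_m(f,g,\cdots)+\La_m(f\vee g,f\we g,\cdots)\le\La_m(f,g,\cdots)$ is vacuous as stated, so what I really need is $\La_m(f,g,f_3,\dots,f_n)+\La_m(f\we g,f\vee g,f_3,\dots,f_n)\le\La_m(f,g,f_3,\dots,f_n)$ — again degenerate. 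The correct target is simply \eqref{eq:submod} for the map $\la_0(h):=\La_m(h_1,\dots,h_n)$ with $h_{1}=f$ and $h_2=g$ variable: $\la_0(f)+\la_0(g)\ge\la_0(f\vee g)+\la_0(f\we g)$, where on the left $(h_1,h_2)=(f,g)$ and on the right $(h_1,h_2)=(f\vee g,f\we g)$ and all other coordinates are held fixed; so it suffices to show that for each fixed tuple of the remaining coordinates, the two-variable function $(x,y)\mapsto\La_m(x,y,f_3,\dots,f_n)$ is submodular in the sense of \eqref{eq:submod}.

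The key step is to group the terms of \eqref{eq:La_m} according to which of the indices $1,2$ lie in the image of $\pi$. Writing the sum over $\pi\in\Pi_k^{[n]}$ as a sum of four parts — those $\pi$ whose image contains neither $1$ nor $2$; those containing $1$ but not $2$; those containing $2$ but not $1$; and those containing both — I observe that the first part does not depend on $(x,y)=(f_1,f_2)$ at all and so contributes equally to both sides. The second and third parts are, after using multiadditivity, sums of expressions of the form $A(x)$ and $A(y)$ for various additive functions $A\colon L\to\R$ built from $m$ by plugging fixed functions $f_3,\dots,f_n$ into all but one slot; for these, $A(x)+A(y)=A(x\we y)+A(x\vee y)$ by additivity together with the complementability identity $x=x\we y+x\sm y$, $y=x\we y+y\sm x$, $x\vee y=x\we y+x\sm y+y\sm x$, and the pointwise fact that $(x\sm y)\we(y\sm x)=0$ — so these parts are in fact modular and contribute equally as well. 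The real content is therefore the fourth part: terms $m(\dots)$ in which one variable slot of $m$ gets $x$ and another gets $y$. Collecting these, they reduce to a nonnegative linear combination of expressions $B(x,y)+B(y,x)$ versus $B(x\vee y,x\we y)+B(x\we y,x\vee y)$, where $B\colon L^2\to\R$ is additive in each argument and nonnegative; here I expect to need the elementary inequality $B(x,y)+B(y,x)\ge B(x\we y,x\vee y)+B(x\vee y,x\we y)$, which for a nonnegative biadditive $B$ follows by expanding each argument via $x=x\we y+x\sm y$, using biadditivity to write everything in terms of $B$ evaluated on the ``disjoint pieces'' $x\we y$, $x\sm y$, $y\sm x$, and noting that the cross term $B(x\sm y,y\sm x)+B(y\sm x,x\sm y)$ appears on the left-hand side but not the right, while all the remaining summands match up — using $\we$-$\sm$-disjointness pointwise to see that $B$ of a sum over these pieces really does split additively.

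Thus the main obstacle, and where care is needed, is precisely the bookkeeping in this fourth part: making sure that when $\pi$ ranges over $k$-permutations of $[n]$ whose image contains both $1$ and $2$, the induced biadditive functions $B$ and their multiplicities are correctly identified, and that after expanding via complementability one genuinely gets a nonnegative coefficient in front of the ``missing'' cross term $B(x\sm y,y\sm x)+B(y\sm x,x\sm y)\ge0$. The nonnegativity of $m$ (hence of each such $B$ on the disjoint pieces, which are themselves in $L$) is exactly what makes this cross term have the right sign. Everything else is routine: the reduction to $j=0$ by symmetry, the reduction to \eqref{eq:submod} by the definition of generalized $(n\col 2)$-submodularity, the fact that additive one-variable functions are modular, and then the final appeal to Theorem~\ref{th:} to upgrade $(n\col 2)$-submodularity to full $n$-submodularity.
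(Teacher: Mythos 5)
Your argument is correct and is essentially the paper's own proof: your four-way split of the sum over $\pi$ according to whether the image contains $1$, $2$, both, or neither matches the paper's decomposition into $\la_0$, $\la_1(\cdot)+\la_1(\cdot)$, and $\la_2(\cdot,\cdot)$ (the paper varies the last two coordinates and works with the symmetrized $\m$ of \eqref{eq:m bar}), with the one-variable additive parts being modular and the biadditive part losing exactly the nonnegative cross term $B(x\sm y,y\sm x)+B(y\sm x,x\sm y)$. The only blemish is the garbled false starts before you pin down the target inequality; what is actually required (and what your computation delivers) is $\La_m(x,y,f_3,\dots,f_n)\ge\La_m(x\we y,x\vee y,f_3,\dots,f_n)$, i.e.\ generalized $2$-submodularity of the two-variable restriction, not \eqref{eq:submod} applied literally to a one-variable function.
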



Formula \eqref{eq:La_m} can be rewritten in the following symmetrized form: 
\begin{equation}\label{eq:La_m,rewr}
	\La_m(f_1,\dots,f_n)=k!\,\sum_{I\in\binom{[n]}k}\m(f_I),  
\end{equation}
where, for $I=\{i_1,\dots,i_k\}$ with $1\le i_1<\dots<i_k\le n$, 
\begin{equation}\label{eq:m bar}
	\m(f_I):=\m(f_{i_1},\dots,f_{i_k}):=\frac1{k!}\,\sum_{\pi\in\Pi_k^I}m(f_{\pi(1)},\dots,f_{\pi(k)});    
\end{equation}
note that the so-defined function $\m\colon L^k\to\R$ is multiadditive and nonnegative, given that $m$ is so. Also, $\m$ is permutation-symmetric in the sense that \break  $\m(f_{\pi(1)},\dots,f_{\pi(k)})=\m(f_1,\dots,f_k)$ for all $(f_1,\dots,f_k)\in L^k$ and all permutations $\pi\in\Pi_k^{[k]}$. 

\begin{example}\label{ex:multilin}
If $V$ is a vector sublattice of the lattice $\R^S$ and $L$ is the lattice of all nonnegative functions in $V$ 
then, clearly, $L$ is complementable and the restriction to $L^k$ of  
any multilinear function from $V^k$ to $\R$ is multiadditive. 

In particular, if $\mu$ is a measure on a $\si$-algebra $\Si$ over $S$, $V$ is a vector sublattice of $L^k(S,\Si,\mu)$, and $L$ is the lattice of all nonnegative functions in $V$, then the function $m\colon L^k\to\R$ given by the formula 
$$m(f_1,\dots,f_k):=\int_S f_1\cdots f_k \dd\mu$$ 
for $(f_1,\dots,f_k)\in L^k$ 
is 
multiadditive. 

So, by Proposition~\ref{prop:multiadd}, the functions $\La_m$ corresponding to the functions $m$ presented above in this example are generalized $(n\col 2)$-submodular and hence generalized $n$-submodular.  

Let now $B=(b_{i,j})$ be a $d\times p$ matrix with $d\le p$ and nonnegative entries $b_{i,j}$. The  permanent of $B$ is defined by the formula 
\begin{equation*}
	\perm B:=\sum_{J\in\binom{[p]}d}\perm B_{\cdot J},
\end{equation*}
where $B_{\cdot J}$ the square submatrix of $B$ consisting of the columns of $B$ with column indices in the set $J\in\binom{[p]}d$; and for a square $d\times d$ matrix $B=(b_{i,j})$, 
\begin{equation*}
	\perm B :=\sum_{\pi\in\Pi_d^{[d]}}b_{1,\pi(1)}\cdots b_{d,\pi(d)}. 
\end{equation*}
So, $\perm B$ is a multilinear function of the $d$-tuple $(b_{1,\cdot},\dots,b_{d,\cdot})$ of the rows of $B$. Also, if $d=p$, then $\perm B$ is a multilinear function of the $d$-tuple $(b_{\cdot,1},\dots,b_{\cdot,d})$ of the columns of $B$.
If $d\ge p$, then $\perm B$ may be defined by the requirement that the permanent be invariant with respect to transposition. 

Thus, from Proposition~\ref{prop:multiadd} we immediately obtain 

\begin{corollary}\label{cor:perm}
Assuming that the entries $b_{i,j}$ of the $d\times p$ matrix $B$ 
are nonnegative, 
$\perm B$ is a generalized $d$-submodular function of the $d$-tuple $(b_{1,\,\cdot},\dots,b_{d,\,\cdot})$ of its rows and a generalized $p$-submodular function of the $p$-tuple $(b_{\cdot,\,1},\dots,b_{\cdot,\,p})$ of its columns (with respect to the standard lattice structures on $\R^{1\times p}$ and $\R^{d\times1}$, respectively): 
\begin{align*}
	\perm
	\begin{pmatrix}
	b_{d\col 1,\,\cdot}\\ \vdots \\ b_{d\col d,\,\cdot}
\end{pmatrix}
&\le
	\perm
	\begin{pmatrix}
	b_{1,\,\cdot}\\ \vdots \\ b_{d,\,\cdot}
\end{pmatrix}[=\perm B],  
\\ 
\perm(b_{\cdot,\,p\col 1},\dots,b_{\cdot,\,p\col})&\le\perm(b_{\cdot,\,1},\dots,b_{\cdot,\,p})[=\perm B]. 
\end{align*}
\end{corollary}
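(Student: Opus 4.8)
The plan is to read off both assertions from Proposition~\ref{prop:multiadd}, by writing $\perm B$ as a function $\La_m$ of the rows (respectively, columns) of $B$ for a suitably chosen nonnegative multiadditive $m$. Since the permanent is invariant under transposition, I may assume $d\le p$. In both parts the relevant lattice is the cone $L$ of nonnegative real-valued functions on a finite set (namely $[d]$ or $[p]$), with the coordinatewise lattice operations; the tuples of rows and columns of $B$ have nonnegative entries and the lattice operations preserve nonnegativity, so restricting attention to this cone loses nothing. By Example~\ref{ex:multilin}, $L$ is complementable and the restriction to $L^k$ of any multilinear map is multiadditive, which is how the functions $m$ below arise; and by Remark~\ref{rem:pointwise}, in a lattice of functions the ``order statistics'' of an $n$-tuple of vectors are simply their coordinatewise order statistics, so the two displayed inequalities are exactly the defining instances of generalized $n$-submodularity.

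For the columns, take $S=[d]$, let $L$ be the cone of nonnegative functions on $[d]$, put $k=d$ and $n=p$ (so $k\le n$), and let $m\colon L^d\to\R$ be given by $m(g_1,\dots,g_d):=\prod_{i=1}^d g_i(i)$; this is the restriction of a multilinear map $(\R^d)^d\to\R$, hence multiadditive, and it is nonnegative on $L^d$. Splitting the sum $\perm B=\sum_{J\in\binom{[p]}d}\perm B_{\cdot J}$ according to the image $J=\pi([d])$ gives the identity $\perm B=\sum_{\pi\in\Pi_d^{[p]}}\prod_{i=1}^d b_{i,\pi(i)}$, and with $f_j:=b_{\cdot,j}$ the right-hand side equals $\La_m(f_1,\dots,f_p)$ as defined by \eqref{eq:La_m}. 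Proposition~\ref{prop:multiadd} then yields that $\La_m=\perm B$ is generalized $(p\col2)$-submodular, hence generalized $p$-submodular, in the columns of $B$.

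For the rows, take $S=[p]$, let $L$ be the cone of nonnegative functions on $[p]$, and put $k=n=d$; here the symmetrizing sum in \eqref{eq:La_m} runs only over the permutations $\Pi_d^{[d]}$ of $[d]$, so, since the permanent of a $d\times p$ matrix is both multilinear in and symmetric under permutations of its $d$ rows, it is natural to let $m\colon L^d\to\R$ send $(g_1,\dots,g_d)$ to $\frac1{d!}$ times the permanent of the $d\times p$ matrix with rows $g_1,\dots,g_d$. This $m$ is again the restriction of a multilinear map, hence multiadditive, and is nonnegative on $L^d$, and for $f_i:=b_{i,\cdot}$ the row-permutation invariance of the permanent gives $\La_m(f_1,\dots,f_d)=\sum_{\rho\in\Pi_d^{[d]}}\frac1{d!}\,\perm B=\perm B$. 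Proposition~\ref{prop:multiadd} then yields that $\perm B$ is generalized $(d\col2)$-submodular, hence generalized $d$-submodular, in the rows of $B$.

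The computations involved are routine; the only points requiring a little care are the combinatorial identity expressing $\perm B$ as a sum over injections (needed to match the column case with the template \eqref{eq:La_m}), the verification that the cone of nonnegative vectors is complementable and that its order statistics are the coordinatewise ones, and the trivial reduction of the case $d\ge p$ by transposition — so I do not expect any genuine obstacle, in keeping with the corollary being stated to follow ``immediately'' from Proposition~\ref{prop:multiadd}.
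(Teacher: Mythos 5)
Your proof is correct and follows exactly the route the paper intends: the paper derives Corollary~\ref{cor:perm} ``immediately'' from Proposition~\ref{prop:multiadd} via Example~\ref{ex:multilin} (multilinear $\Rightarrow$ multiadditive on the complementable cone of nonnegative vectors), and your explicit choices of $m$ — the diagonal product $\prod_i g_i(i)$ for the columns and $\frac1{d!}\perm$ for the rows, together with the identity $\perm B=\sum_{\pi\in\Pi_d^{[p]}}\prod_i b_{i,\pi(i)}$ and the transposition reduction for $d\ge p$ — are precisely the details the paper leaves to the reader. No gaps.
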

Note that the condition $d\le p$ is not needed or assumed in Corollary~\ref{cor:perm}.

Yet another way in which multilinear and hence multiadditive functions may arise is via 
the elementary symmetric polynomials. 
Let $n$ be any natural number, and let $k\in[n]
$. 
The elementary symmetric polynomials are defined by the formula  
\begin{equation*}
	e_k(x_1,\dots,x_n):=\sum_{J\in\binom{[n]}k}\prod_{j\in J}x_j. 
\end{equation*}
In particular, $e_1(x_1,\dots,x_n):=\sum_{j\in[n]}x_j$ and $e_n(x_1,\dots,x_n):=\prod_{j\in[n]}x_j$. 

Let $f=(f_1,\dots,f_n)$ be the vector of 
measurable functions $f_1,\dots,f_n$ defined  
on a measure space $(S,\Si,\mu)$ with values in the interval $[0,\infty)$. 
Then it is not hard to see that the ``order statistics'' are nonnegative measurable functions as well. As usual, let $\mu(h):=\int_S h\,\dd\mu$. 

If the measure $\mu$ is a probability measure, then the functions $f_1,\dots,f_n$ are called random variables (r.v.'s) and, in this case, $f_{n:1},\dots,f_{n:n}$ will indeed be what is commonly referred to as the order statistics based on the ``random sample'' $f=(f_1,\dots,f_n)$; cf.\ e.g.\ \cite
{david-nagaraja}. In contrast with settings common in statistics, in general we do not impose any conditions 
on the joint or individual distributions of the r.v.'s $f_1,\dots,f_n$ -- except that these r.v.'s be nonnegative. 

Then we have the following. 
\begin{corollary}\label{th:symm poly}
\begin{equation}\label{eq:symm poly}
	e_k\big(\mu(f_1),\dots,\mu(f_n)\big)\ge e_k\big(\mu(f_{n:1}),\dots,\mu(f_{n:n})\big). 
\end{equation}
In particular, 
\begin{equation}\label{eq:prod}
	\mu(f_1)\cdots\mu(f_n)\ge\mu(f_{n:1})\cdots\mu(f_{n:n}). 
\end{equation}
\end{corollary}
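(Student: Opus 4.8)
The plan is to obtain Corollary~\ref{th:symm poly} from Proposition~\ref{prop:multiadd}, applied to the multiadditive function given by a product of integrals. First, one may assume that all the integrals $\mu(f_1),\dots,\mu(f_n)$ are finite. Indeed, suppose $\mu(f_{j_0})=\infty$ for some $j_0$. If at least $k$ of the integrals $\mu(f_1),\dots,\mu(f_n)$ are positive, then $\mu(f_{j_0})$ together with $k-1$ further positive integrals yields a product equal to $+\infty$, so the left-hand side of \eqref{eq:symm poly} equals $+\infty$ and the inequality is trivial. If instead only $r<k$ of the $\mu(f_i)$ are positive, then $\mu(f_i)=0$ forces $f_i=0$ a.e., so a.e.\ at most $r$ of the values $f_1(\cdot),\dots,f_n(\cdot)$ are nonzero; hence the $n-r$ smallest of $f_{n:1},\dots,f_{n:n}$ vanish a.e., at most $r<k$ of $\mu(f_{n:1}),\dots,\mu(f_{n:n})$ are positive, and both sides of \eqref{eq:symm poly} equal $0$. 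Thus we may and do assume all $\mu(f_i)$ finite.

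Next I would take for $L$ the lattice of nonnegative $\mu$-integrable functions on $(S,\Si,\mu)$, with the pointwise operations $\vee,\wedge$. By Example~\ref{ex:multilin}, with $V$ the vector lattice of $\mu$-integrable functions, $L$ is a complementable sublattice of $\R^S$; it is moreover distributive, being a sublattice of $\R^S$. By Remark~\ref{rem:pointwise}, the ``order statistics'' $f_{n:1},\dots,f_{n:n}$ produced by \eqref{eq:wedge-vee} in $L$ are the pointwise order statistics characterized by \eqref{eq:ord}; and since $0\le f_{n:j}\le f_{n:n}=\bigvee_i f_i\le\sum_i f_i$, each $f_{n:j}$ again belongs to $L$.

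Now define $m\colon L^k\to\R$ by $m(g_1,\dots,g_k):=\mu(g_1)\cdots\mu(g_k)$. As a product of the linear functionals $g\mapsto\mu(g)=\int_S g\dd\mu$, $m$ is a multilinear form on $V^k$, so by Example~\ref{ex:multilin} its restriction to $L^k$ is multiadditive; it is clearly nonnegative on $L^k$. Hence, by Proposition~\ref{prop:multiadd}, the function $\La_m$ of \eqref{eq:La_m} is generalized $n$-submodular. Grouping the injections $\pi\in\Pi_k^{[n]}$ according to their ranges (each $J\in\binom{[n]}k$ being the range of exactly $k!$ of them) gives
\[
\La_m(f_1,\dots,f_n)=\sum_{\pi\in\Pi_k^{[n]}}\mu(f_{\pi(1)})\cdots\mu(f_{\pi(k)})=k!\sum_{J\in\binom{[n]}k}\prod_{j\in J}\mu(f_j)=k!\,e_k\big(\mu(f_1),\dots,\mu(f_n)\big),
\]
and likewise $\La_m(f_{n:1},\dots,f_{n:n})=k!\,e_k(\mu(f_{n:1}),\dots,\mu(f_{n:n}))$. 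Writing out the generalized $n$-submodularity of $\La_m$ at $f=(f_1,\dots,f_n)$, namely $\La_m(f_1,\dots,f_n)\ge\La_m(f_{n:1},\dots,f_{n:n})$, and dividing by $k!$, we get \eqref{eq:symm poly}; the case $k=n$, where $e_n$ is the product, is \eqref{eq:prod}.

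The argument is essentially a matter of assembling results already in hand, so I do not anticipate a real obstacle. The step needing the most care is the identification, above, of the lattice ``order statistics'' of \eqref{eq:wedge-vee} with the pointwise ones, which is precisely the content of Remark~\ref{rem:pointwise}. (Alternatively, one could bypass Proposition~\ref{prop:multiadd} and invoke Proposition~\ref{prop:schur} with $\la=\mu$, which is modular and nondecreasing, and $F=e_k$, which is nondecreasing and Schur-concave on $[0,\infty)^n$; the finite-integral reduction of the first paragraph would still be made first.)
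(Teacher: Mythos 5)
Your proposal is correct and follows essentially the same route as the paper, which likewise deduces the corollary from Proposition~\ref{prop:multiadd} and the symmetrized formula \eqref{eq:La_m,rewr} applied to the nonnegative multilinear (hence multiadditive) function $m(g_1,\dots,g_k)=\mu(g_1)\cdots\mu(g_k)$. Your preliminary reduction to finite integrals is a harmless elaboration of what the paper handles via the convention $0\cdot\infty:=0$ stated just after the corollary.
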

This follows immediately from Proposition~\ref{prop:multiadd} and formula \eqref{eq:La_m,rewr}, since the product $\mu(f_1)\cdots\mu(f_k)$ is clearly multilinear and hence multiadditive in $(f_1,\dots,f_k)$. 
\end{example}

To deal with cases when some of the $\mu(f_j)$'s (or the $\mu(f_{n:j})$'s) equal $0$ and other ones equal $\infty$, let us assume here the convention $0\cdot\infty:=0$. 
One may note that, if the nonnegative functions $f_1,\dots,f_n$ are scalar multiples of one another or, more generally, if $f_{\pi(1)}\le\cdots\le f_{\pi(n)}$ for some permutation $\pi$ of the set $[n]$, then inequality \eqref{eq:symm poly} turns into the equality. 


As mentioned above, in Corollary~\ref{th:symm poly} it is not assumed that $f_1,\dots,f_n$ are independent r.v.'s. However, if $\mu$ is a probability measure and the r.v.'s $f_1,\dots,f_n$ are independent (but not necessarily identically distributed), then 
$\mu(f_1)\cdots\mu(f_n)=\mu(f_1\cdots f_n)=\mu(f_{n:1}\cdots f_{n:n})$ by the second part of \eqref{eq:ord}, and so, \eqref{eq:prod} can then be rewritten as the following positive-association-type inequality for the order statistics:  
\begin{equation}\label{eq:indep}
	\mu(f_{n:1}\cdots f_{n:n})\ge\mu(f_{n:1})\cdots\mu(f_{n:n}). 
\end{equation}

Let now $\psi$ be any monotone (that is, either nondecreasing or nonincreasing) function from $[0,\infty]$ to $[0,\infty]$. 
For $f=(f_1,\dots,f_n)$ as before, 
let 
$$\psif:=(\psi\circ f_1,\dots,\psi\circ f_n).$$ 
Then for $j\in[n]$ one has $(\psif)_{n:j}=\psi\circ f_{n:j}$ if $\psi$ is nondecreasing and $(\psif)_{n:j}=\psi\circ f_{n:n+1-j}$ if $\psi$ is nonincreasing. 
Thus, we have the following ostensibly more general forms of \eqref{eq:prod} and \eqref{eq:indep}:

\begin{corollary}\label{cor:psi}
\begin{equation}\label{eq:psi}
	\mu(\psi\circ f_1)\cdots\mu(\psi\circ f_n)\ge\mu\big((\psif)_{n:1})\cdots\mu((\psif)_{n:n}\big). 
\end{equation} 
If $\mu$ is a probability measure and the r.v.'s $f_1,\dots,f_n$ are independent, then  
\begin{equation}\label{eq:psi,indep}
	\mu\big((\psif)_{n:1}\cdots (\psif)_{n:n}\big)
	\ge\mu\big((\psif)_{n:1}\big)\cdots\mu\big((\psif)_{n:n}\big). 
\end{equation}
\end{corollary}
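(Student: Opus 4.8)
The plan is to obtain both displays by applying the already-proved Corollary~\ref{th:symm poly} --- more precisely, its conclusions \eqref{eq:prod} and \eqref{eq:indep} --- to the $n$-tuple $\psif=(\psi\circ f_1,\dots,\psi\circ f_n)$ in place of $f$. Since $\psi$ is monotone, it is Borel measurable, so each $\psi\circ f_j$ is a nonnegative measurable function and, by the remark preceding Corollary~\ref{th:symm poly}, so are the ``order statistics'' $(\psif)_{n:1},\dots,(\psif)_{n:n}$; moreover, as recorded just before the present statement, $(\psif)_{n:j}$ equals $\psi\circ f_{n:j}$ or $\psi\circ f_{n:n+1-j}$ according as $\psi$ is nondecreasing or nonincreasing, which is immediate from the pointwise characterization \eqref{eq:ord}.

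First I would treat the case of bounded $\psi$, so that $\psif$ is an $[0,\infty)^n$-valued tuple. Then Corollary~\ref{th:symm poly} applies to $\psif$ verbatim: inequality \eqref{eq:prod} written for $\psif$ is exactly \eqref{eq:psi}; and, when $\mu$ is a probability measure and $f_1,\dots,f_n$ are independent, the r.v.'s $\psi\circ f_1,\dots,\psi\circ f_n$ are independent as well, being Borel functions of independent r.v.'s, so that inequality \eqref{eq:indep} written for $\psif$ is exactly \eqref{eq:psi,indep}.

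To remove the boundedness hypothesis I would replace $\psi$ by $\psi\wedge N$ --- still monotone in the same direction, and now bounded --- apply the bounded case, and let $N\to\infty$. For each $j$ one has $(\psi\wedge N)\circ f_j\uparrow\psi\circ f_j$ pointwise, hence $\mu\big((\psi\wedge N)\circ f_j\big)\uparrow\mu(\psi\circ f_j)$ by monotone convergence; and since the operations $\vee,\wedge$ defining the ``order statistics'' in \eqref{eq:wedge-vee} are monotone, also $\big((\psi\wedge N)\bullet f\big)_{n:j}\uparrow(\psif)_{n:j}$ pointwise, whence $\mu\big(((\psi\wedge N)\bullet f)_{n:j}\big)\uparrow\mu\big((\psif)_{n:j}\big)$ and, for the purposes of \eqref{eq:psi,indep}, $\mu\big(\prod_j((\psi\wedge N)\bullet f)_{n:j}\big)\uparrow\mu\big(\prod_j(\psif)_{n:j}\big)$. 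Passing to the limit in the (finitely many) products, under the convention $0\cdot\infty:=0$, is harmless here because a nondecreasing $[0,\infty]$-valued sequence with limit $0$ is identically $0$; thus the only way a factor can tend to $0$ is to vanish for every $N$, and in all remaining cases the product of the limits is the limit of the products. This yields \eqref{eq:psi} and \eqref{eq:psi,indep} in general.

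There is really no obstacle beyond this bookkeeping: the entire substance is contained in Corollary~\ref{th:symm poly}, and the only mildly delicate point is the passage to $[0,\infty]$-valued $\psi$, which is why I would organize the argument around the bounded case together with a monotone-convergence limit. If one is content to assume $\psi$ bounded, or merely that each $\psi\circ f_j$ is $\mu$-integrable, the corollary is a verbatim restatement of \eqref{eq:prod} and \eqref{eq:indep}.
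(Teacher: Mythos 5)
Your proof is correct and follows essentially the same route as the paper, which obtains Corollary~\ref{cor:psi} simply by applying \eqref{eq:prod} and \eqref{eq:indep} to the tuple $\psif$ (using that $(\psif)_{n:j}$ is $\psi\circ f_{n:j}$ or $\psi\circ f_{n:n+1-j}$ according to the monotonicity of $\psi$). Your additional truncation/monotone-convergence step for $[0,\infty]$-valued $\psi$ is sound extra bookkeeping that the paper leaves implicit.
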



The property of the order statistics $f_{n:1},\cdots,f_{n:n}$ given by inequality \eqref{eq:psi,indep} may be called the diagonal positive orthant dependence -- cf.\ e.g.\ Definition~2.3 in \cite{joag-dev--proschan} of the negative orthant dependence.


Immediately from Theorem~\ref{th:} or from inequality \eqref{eq:psi} in Corollary~\ref{cor:psi}, one obtains 

\begin{corollary}\label{cor:1}
Take any $p\in\R\setminus\{0\}$. Then 
\begin{equation}\label{eq:p,r>0}
	\mu(f_1^p)^r\cdots\mu(f_n^p)^r\ge\mu(f_{n:1}^p)^r\cdots\mu(f_{n:n}^p)^r
\end{equation}
for any $r\in(0,\infty)$, and 
\begin{equation}\label{eq:p,r<0}
	\mu(f_1^p)^r\cdots\mu(f_n^p)^r\le\mu(f_{n:1}^p)^r\cdots\mu(f_{n:n}^p)^r
\end{equation}
for any $r\in(-\infty,0)$. 
Here we use the conventions 
$0^t:=\infty$ and $\infty^t:=0$ for $t\in(-\infty,0)$. We also the following conventions: $0\cdot\infty:=0$ concerning \eqref{eq:p,r>0} and $0\cdot\infty:=\infty$ concerning \eqref{eq:p,r<0}. 
\end{corollary}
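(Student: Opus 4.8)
The plan is to derive Corollary~\ref{cor:1} from inequality \eqref{eq:psi} of Corollary~\ref{cor:psi} by a single substitution, followed by raising both sides to the power $r$. First I would apply \eqref{eq:psi} with $\psi$ taken to be the power function $\psi(t):=t^p$ on $[0,\infty]$, interpreted through the conventions $0^p:=\infty$ and $\infty^p:=0$ when $p<0$ (and $0^p:=0$, $\infty^p:=\infty$ when $p>0$). With these conventions $\psi$ maps $[0,\infty]$ into $[0,\infty]$ and is monotone --- nondecreasing if $p>0$, nonincreasing if $p<0$ --- so Corollary~\ref{cor:psi} applies to the nonnegative measurable functions $f_1,\dots,f_n$. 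Since $\mu(\psi\circ f_j)=\mu(f_j^p)$, and since $(\psi\bullet f)_{n:j}=\psi\circ f_{n:j}$ when $\psi$ is nondecreasing whereas $(\psi\bullet f)_{n:j}=\psi\circ f_{n:n+1-j}$ when $\psi$ is nonincreasing, in either case $\prod_{j=1}^n\mu\big((\psi\bullet f)_{n:j}\big)=\prod_{j=1}^n\mu(f_{n:j}^p)$ (in the nonincreasing case after the obvious reindexing of the product by $j\mapsto n+1-j$). Hence \eqref{eq:psi} reduces, for every $p\in\R\sm\{0\}$, to
\[
A:=\prod_{j=1}^n\mu(f_j^p)\ \ge\ \prod_{j=1}^n\mu(f_{n:j}^p)=:B,
\]
with the products formed under the convention $0\cdot\infty:=0$ in force for \eqref{eq:psi}; this is precisely \eqref{eq:prod} when $p=1$.

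Next I would apply the map $x\mapsto x^r$ to both sides of $A\ge B$: for $r>0$ it is a nondecreasing bijection of $[0,\infty]$ onto itself, giving $A^r\ge B^r$, while for $r<0$ it is nonincreasing, with $0^r=\infty$ and $\infty^r=0$, giving $A^r\le B^r$. It then remains to identify $A^r$ with $\prod_{j=1}^n\mu(f_j^p)^r$ and $B^r$ with $\prod_{j=1}^n\mu(f_{n:j}^p)^r$ under the conventions declared in the statement, namely $0\cdot\infty:=0$ for \eqref{eq:p,r>0} and $0\cdot\infty:=\infty$ for \eqref{eq:p,r<0}. This is the one step that requires attention, and I expect it to be the only real obstacle --- essentially the bookkeeping of $0$ and $\infty$. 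One checks, by a short case analysis according to whether none, some, or all of the factors $\mu(f_j^p)$ (respectively $\mu(f_{n:j}^p)$) vanish or are infinite, that $\big(\prod_j a_j\big)^r=\prod_j a_j^r$ under these conventions. The only delicate case is $r<0$ with both a vanishing factor and an infinite factor present: there $A$ was already evaluated as $0$ (since $0\cdot\infty=0$ was used in obtaining $A\ge B$), so $A^r=0^r=\infty$, whereas $\prod_j a_j^r$ contains the factors $0^r=\infty$ and $\infty^r=0$ and so, under the convention $0\cdot\infty=\infty$ attached to \eqref{eq:p,r<0}, also equals $\infty$; the remaining cases are straightforward. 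Combining this with $A^r\ge B^r$ (for $r>0$) or $A^r\le B^r$ (for $r<0$) yields \eqref{eq:p,r>0} and \eqref{eq:p,r<0}, respectively.

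Alternatively, one can argue directly from Theorem~\ref{th:}, in the spirit of Proposition~\ref{prop:schur}: for $r>0$ let $\RR=[0,\infty]$ with $\l$ equal to $\ge$ and put $\La(f_1,\dots,f_n):=\prod_{j=1}^n\mu(f_j^p)^r$. Freezing all but two arguments, one needs $\mu(f^p)^r\mu(g^p)^r\ge\mu((f\wedge g)^p)^r\mu((f\vee g)^p)^r$, multiplied by a frozen product that is handled by the same conventions. Since $\{(f\wedge g)^p,(f\vee g)^p\}=\{f^p\wedge g^p,\,f^p\vee g^p\}$ at every point, linearity of the integral gives $\mu((f\wedge g)^p)+\mu((f\vee g)^p)=\mu(f^p)+\mu(g^p)$, and moreover one of $\mu((f\wedge g)^p),\mu((f\vee g)^p)$ is $\le\mu(f^p)\wedge\mu(g^p)$ while the other is $\ge\mu(f^p)\vee\mu(g^p)$; hence the pair $\big(\mu((f\wedge g)^p),\mu((f\vee g)^p)\big)$ majorizes $\big(\mu(f^p),\mu(g^p)\big)$ (the cases where some value is infinite being again covered by the conventions). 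As $(x,y)\mapsto x^ry^r$ is nondecreasing and Schur-concave on $[0,\infty)^2$, the required inequality follows, so $\La$ is generalized $(n\col2)$-submodular and Theorem~\ref{th:} upgrades it to generalized $n$-submodularity, which is \eqref{eq:p,r>0}. The case $r<0$ (and any $p\in\R\sm\{0\}$) follows by the evident reversals of the relation.
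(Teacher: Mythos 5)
Your proposal is correct and follows exactly the two routes the paper itself indicates (the paper states the corollary follows ``immediately from Theorem~\ref{th:} or from inequality \eqref{eq:psi} in Corollary~\ref{cor:psi}'' without further detail): substituting $\psi(t)=t^p$ into \eqref{eq:psi} and raising to the power $r$, or verifying generalized $(n\col2)$-submodularity directly. Your careful bookkeeping of the $0\cdot\infty$ and $0^r$, $\infty^r$ conventions supplies details the paper leaves implicit, and both arguments are sound.
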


Consider now the special case of Corollary~\ref{cor:1} with $r=1/p$. Letting then  
$p\to\infty$, we see that \eqref{eq:p,r>0} will hold with the $\mu(f_j^p)^r$'s and $\mu(f_{n:j}^p)^r$'s replaced there by $\mu\text{-}\!\esssup f_j$ and $\mu\text{-}\!\esssup f_{n:j}$, respectively, where $\mu\text{-}\!\esssup$ denotes the essential supremum with respect to measure $\mu$. This follows because $\mu(h^p)^{1/p}\underset{p\to\infty}\longrightarrow\mu\text{-}\!\esssup h$. 
Similarly, letting $p\to-\infty$, we see that \eqref{eq:p,r<0} will hold with the $\mu(f_j^p)^r$'s and $\mu(f_{n:j}^p)^r$'s replaced there by $\mu\text{-}\!\essinf f_j$ and $\mu\text{-}\!\essinf f_{n:j}$, respectively, where $\mu\text{-}\!\essinf$ denotes the essential infimum with respect to $\mu$. 
Moreover, considering (say) the counting measures $\mu$ on finite subsets of the set $S$ and noting that $\sup h=\sup_S h$ coincides with the limit of the net $(\max_J h)$ over the filter of all finite subsets $J$ of $S$, we conclude that \eqref{eq:p,r>0} will hold with the $\mu(f_j^p)^r$'s and $\mu(f_{n:j}^p)^r$'s replaced there by $\sup f_j$ and $\sup f_{n:j}$, respectively. \big(The statement about the limit can be spelled out as follows: 
$\sup_S h\ge \max_J h$ for all finite $J\subseteq S$, and for each real $c$ such that $c<\sup h$ there is some finite set $J_c\subseteq S$ such that for all finite sets $J$ such that $J_c\subseteq J\subseteq S$ one has $\max_J h>c$.\big) Similarly, \eqref{eq:p,r<0} will hold with the $\mu(f_j^p)^r$'s and $\mu(f_{n:j}^p)^r$'s replaced there by $\inf f_j$ and $\inf f_{n:j}$, respectively. Thus, we have 

\begin{corollary}\label{cor:max,min}
\begin{equation}\label{eq:sup}
	(\sup f_1)\cdots(\sup f_n)\ge(\sup f_{n:1})\cdots(\sup f_{n:n})
\end{equation}
and 
\begin{equation}\label{eq:inf}
	(\inf f_1)\cdots(\inf f_n)\le(\inf f_{n:1})\cdots(\inf f_{n:n}). 
\end{equation}
Here we use the following conventions: $0\cdot\infty:=0$ concerning \eqref{eq:sup} and $0\cdot\infty:=\infty$ concerning \eqref{eq:inf}. 
\end{corollary}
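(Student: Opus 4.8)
The plan is to deduce \eqref{eq:sup} and \eqref{eq:inf} directly from Theorem~\ref{th:}, by recognizing them as the statements that two explicit functions on $L^n$ are generalized $n$-submodular, resp.\ generalized $n$-supermodular, where $L$ is the (distributive) lattice of nonnegative functions on $S$ under pointwise $\vee$ and $\wedge$. Concretely, for \eqref{eq:sup} I would take $\La(f_1,\dots,f_n):=(\sup_S f_1)\cdots(\sup_S f_n)$: since the ``order statistics'' $f_{n:1},\dots,f_{n:n}$ are again nonnegative functions on $S$, generalized $n$-submodularity of this $\La$ is literally \eqref{eq:sup}; dually, generalized $n$-supermodularity of $(\inf_S f_1)\cdots(\inf_S f_n)$ is literally \eqref{eq:inf}. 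By Theorem~\ref{th:} it then suffices to prove the corresponding $(n\col2)$-statements.

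For \eqref{eq:sup}, put $\la(f):=\sup_S f\in[0,\infty]$. The three elementary pointwise facts $\la(f\vee g)=\la(f)\vee\la(g)$, $\la(f\wedge g)\le\la(f)\wedge\la(g)$, and ``$\la$ nondecreasing'' hold at once, and together with the identity $(a\vee b)(a\wedge b)=ab$ for $a,b\in[0,\infty]$ they give $\la(f\vee g)\,\la(f\wedge g)\le\la(f)\,\la(g)$, i.e.\ $\la$ is log-submodular. Now fix $j\in\intr1{n-1}$ and all coordinates other than the $j$-th and $(j+1)$-st; then $\La(f_1,\dots,f_n)=C\,\la(f_j)\,\la(f_{j+1})$ with $C:=\prod_{i\ne j,j+1}\la(f_i)\in[0,\infty]$ held fixed, and multiplying the previous inequality by $C$ shows that $(f_j,f_{j+1})\mapsto\La(f_1,\dots,f_n)$ is generalized $2$-submodular. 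Hence $\La$ is generalized $(n\col2)$-submodular, and Theorem~\ref{th:} yields \eqref{eq:sup}.

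The proof of \eqref{eq:inf} is dual: with $\la(f):=\inf_S f$ one has $\la(f\wedge g)=\la(f)\wedge\la(g)$ and $\la(f\vee g)\ge\la(f)\vee\la(g)$, whence $\la(f\vee g)\,\la(f\wedge g)\ge\la(f)\,\la(g)$; so $\prod_j\la(f_j)$ is generalized $(n\col2)$-supermodular and Theorem~\ref{th:} gives \eqref{eq:inf}. (An alternative, essentially the route sketched just before the corollary, is to start from Corollary~\ref{cor:1} with $r=1/p$, specialize $\mu$ to the counting measure on an arbitrary finite $J\subseteq S$ so that $\mu(h^p)^{1/p}=\bigl(\sum_{s\in J}h(s)^p\bigr)^{1/p}\to\max_{s\in J}h(s)$ as $p\to\infty$, and then pass to the directed limit over finite $J$, using $\sup_J\max_J h=\sup_S h$; likewise $p\to-\infty$ for \eqref{eq:inf}. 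Here one also uses that, the order statistics being defined pointwise, restriction to $J$ commutes with forming them.)

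The only genuine obstacle is bookkeeping with the extended reals: one must check that the elementary identities, the factor $C$, and the invocation of Theorem~\ref{th:} behave correctly when some factors are $0$ or $\infty$ --- and, in the limiting route, that $\prod$ commutes with the directed supremum of nonnegative nondecreasing nets and that the $L^p$-to-$L^\infty$ limit is unaffected by $+\infty$ values. (Note $\sup_S f_j,\sup_S f_{n:j}\in[0,\infty]$ but $\inf_S f_j,\inf_S f_{n:j}$ are finite, since the $f_j$ are $[0,\infty)$-valued.) All of this is exactly accommodated by the conventions $0\cdot\infty:=0$ for \eqref{eq:sup} and $0\cdot\infty:=\infty$ for \eqref{eq:inf} recorded in the statement.
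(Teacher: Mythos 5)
Your proof is correct, but your primary route is not the one the paper actually writes out. The paper derives \eqref{eq:sup} and \eqref{eq:inf} as limiting cases of Corollary~\ref{cor:1} with $r=1/p$: it lets $p\to\pm\infty$ to replace $\mu(h^p)^{1/p}$ by $\mu\text{-}\esssup h$ (resp.\ $\mu\text{-}\essinf h$), and then passes from counting measures on finite subsets $J\subseteq S$ to all of $S$ via the net $(\max_J h)$ over the filter of finite subsets --- exactly the argument you sketch only parenthetically. Your main argument is the one the paper dismisses in a single sentence (``Alternatively, one can obtain \eqref{eq:sup} and \eqref{eq:inf} directly from Theorem~\ref{th:}'') without giving details: you verify that $\la(f)=\sup_S f$ satisfies $\la(f\vee g)=\la(f)\vee\la(g)$ and $\la(f\wedge g)\le\la(f)\wedge\la(g)$, hence $\la(f\vee g)\,\la(f\wedge g)\le\la(f)\,\la(g)$ by the identity $(a\vee b)(a\wedge b)=ab$, so that the product $\prod_j\la(f_j)$ is generalized $(n\col2)$-submodular, and then invoke Theorem~\ref{th:} (dually for $\inf$). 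This checks out, including the extended-real bookkeeping: with the stated conventions, multiplication by a fixed $C\in[0,\infty]$ preserves the relevant inequality, and the relation on $[0,\infty]$ is transitive, which is all Theorem~\ref{th:} requires. What each route buys: the paper's limiting argument recycles Corollary~\ref{cor:1} at the cost of the $L^p\to L^\infty$ and finite-subset limit passages; your direct argument is self-contained, shorter, and needs no measurability of the $f_j$ (a point the paper itself makes right after the corollary).
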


Alternatively, one can obtain \eqref{eq:sup} and \eqref{eq:inf} directly from Theorem~\ref{th:}. 

Also, of course there is no need to assume in Corollary~\ref{cor:max,min} that the functions $f_1,\dots,f_n$ are measurable. 

The special cases of inequalities \eqref{eq:p,r<0} and \eqref{eq:inf} for $n=2$ mean that the functions $h\mapsto\mu(h^p)^r$ and $h\mapsto\inf h$ are log-supermodular functions on the distributive lattice (say $\L_\Si$) of all nonnegative $\Si$-measurable functions on $S$ and on the distributive lattice (say $\L$) of all nonnegative functions on $S$, respectively. 

At this point, let us recall the famous Fortuin--Kasteleyn--Ginibre (FKG) correlation 
inequality \cite{fkg}, which states that for any log-supermodular function $\nu$ on a finite distributive lattice $L$ and any nondecreasing functions $F$ and $G$ on $L$ we have 
\begin{equation*}
	\nu(FG)\nu(1)\ge\nu(F)\nu(G),
\end{equation*}
where $\nu(F):=\sum_{f\in L}\nu(f)$. 

Then we immediately obtain 

\begin{corollary}\label{cor:fkg}
Let $\L^\circ_\Si$ be any finite sub-lattice of the lattice $\L_\Si$, and let $F$ and $G$ be nondecreasing functions from $\L^\circ_\Si$ to $\R$. Then 
\begin{equation*}
	\Big(\sum_{h\in\L^\circ_\Si}F(h)G(h)\mu(h)^r\Big)\Big(\sum_{h\in\L^\circ_\Si}\mu(h)^r\Big)
	\ge\Big(\sum_{h\in\L^\circ_\Si}F(h)\mu(h)^r\Big)\Big(\sum_{h\in\L^\circ_\Si}G(h)\mu(h)^r\Big)
\end{equation*}
for any $r\in(-\infty,0)$. 
Similarly, let $\L^\circ$ be any finite sub-lattice of the lattice $\L$, and let $F$ and $G$ be nondecreasing functions from $\L^\circ$ to $\R$. Then 
\begin{equation*}
	\Big(\sum_{h\in\L^\circ_\Si}F(h)G(h)\inf h\Big)\Big(\sum_{h\in\L^\circ_\Si}\inf h\Big)
	\ge\Big(\sum_{h\in\L^\circ_\Si}F(h)\inf h\Big)\Big(\sum_{h\in\L^\circ_\Si}G(h)\inf h\Big). 
\end{equation*}
\end{corollary}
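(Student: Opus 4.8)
The plan is to obtain Corollary~\ref{cor:fkg} as an immediate consequence of the FKG inequality recalled just above it, whose two non-trivial hypotheses -- a finite distributive lattice and a log-supermodular weight -- are already at hand. First I would note that, because $\L_\Si$ (resp.\ $\L$) is distributive under the pointwise operations $\vee$ and $\we$, every sublattice of it is distributive as well; hence $\L^\circ_\Si$ (resp.\ $\L^\circ$) is a \emph{finite distributive} lattice, which is exactly the setting of the FKG inequality.

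Next I would identify the log-supermodular weight. For $r\in(-\infty,0)$, the function $\nu\colon h\mapsto\mu(h)^r$ on $\L_\Si$ satisfies $\nu(f_1)\,\nu(f_2)\le\nu(f_1\we f_2)\,\nu(f_1\vee f_2)$, which is precisely inequality \eqref{eq:p,r<0} in the case $n=2$, $p=1$, since $f_{2:1}=f_1\we f_2$ and $f_{2:2}=f_1\vee f_2$. In the same way, $h\mapsto\inf h$ is log-supermodular on $\L$ by the $n=2$ case of \eqref{eq:inf}. (Here one carries along the conventions $0\cdot\infty:=\infty$ of Corollaries~\ref{cor:1} and \ref{cor:max,min}; alternatively one first treats the generic case in which all the weights $\mu(h)^r$, resp.\ $\inf h$, lie in $(0,\infty)$ and then passes to the general case by a routine monotone-limit argument.)

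Finally I would apply the FKG inequality on the finite distributive lattice $\L^\circ_\Si$ (resp.\ $\L^\circ$), taking $\nu$ to be $h\mapsto\mu(h)^r$ (resp.\ $h\mapsto\inf h$) and $F,G$ the given nondecreasing functions. Since $\nu(1)=\sum_h\nu(h)$, $\nu(F)=\sum_h F(h)\nu(h)$, $\nu(G)=\sum_h G(h)\nu(h)$, and $\nu(FG)=\sum_h F(h)G(h)\nu(h)$ -- all sums over $h$ in the finite sublattice in question -- the FKG inequality $\nu(FG)\,\nu(1)\ge\nu(F)\,\nu(G)$ is word for word the asserted inequality. I do not expect any genuine obstacle: the corollary is a repackaging of the FKG inequality together with the already established $n=2$ log-supermodularity, and the only point calling for a word of care is the bookkeeping for the extended-real values $0$ and $\infty$ among the weights.
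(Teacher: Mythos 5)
Your proposal is correct and follows essentially the same route as the paper, which likewise observes that the $n=2$ cases of \eqref{eq:p,r<0} and \eqref{eq:inf} say exactly that $h\mapsto\mu(h)^r$ (for $r<0$) and $h\mapsto\inf h$ are log-supermodular on $\L_\Si$ and $\L$ respectively, and then invokes the FKG inequality on the finite (distributive) sublattice. Your added remarks on the distributivity of sublattices and on the $0\cdot\infty$ conventions are sensible bookkeeping but do not change the argument.
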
 

As shown by Ahlswede and Daykin \cite[pages 288--289]{ahls-daykin79}, their inequality \cite[Theorem~1]{ahls-daykin79} almost immediately implies, and is in a sense sharper than, the FKG inequality. Furthermore, Rinott and Saks \cite{rinott-saks93,rinott-saks92} and Aharoni and Keich \cite{aharoni-keich} independently obtained a more general inequality ``for $n$-tuples of nonnegative functions on a distributive lattice, of which the Ahlswede--Daykin inequality is the case $n = 2$.''
More specifically, in notation closer to that used in the present paper, \cite[Theorem~1.1]{aharoni-keich} states the following: 

Let $\al_1,\dots,\al_n,\be_1,\dots,\be_n$ be nonnegative functions defined on a distributive lattice $L$ such that 
\begin{equation*}
	\prod_{j=1}^n\al_j(f_j)\le\prod_{j=1}^n\be_j(f_{n:j})
\end{equation*}
for all $f_1,\dots,f_n$ in $L$. Then for any finite subsets $F_1,\dots,F_n$ of $L$ 
\begin{equation*}
	\prod_{j=1}^n\sum_{f_j\in F_j}\al_j(f_j)\le\prod_{j=1}^n\sum_{g_j\in F_{n:j}}\be_j(g_j), 
\end{equation*}
where 
\begin{equation*}
	F_{n:j}:=\{f_{n:j}\colon f=(f_1,\dots,f_n)\in F_1\times\dots\times F_n\}. 
\end{equation*}
Note that the definition of the ``order statistics'' used in \cite{aharoni-keich} is different from \eqref{eq:wedge-vee} in that their ``order statistics'' go in the descending, rather than ascending, order; 
also, the term ``order statistics'' is not used in \cite{aharoni-keich}. 

In view of this result of \cite{aharoni-keich} and our Corollaries~\ref{cor:1} and \ref{cor:max,min}, one immediately obtains the following statement, which generalizes and strengthens Corollary~\ref{cor:fkg}: 

\begin{corollary}\label{cor:AhKe}
Let $\F_1,\dots,\F_n$ be any finite subsets of the lattice $\L_\Si$. For each $j\in[n]$, let 
\begin{equation*}
	\F_{n:j}:=\{f_{n:j}\colon f=(f_1,\dots,f_n)\in\F_1\times\dots\times\F_n\}. 
\end{equation*}
Then 
\begin{equation}\label{eq:AhKe,r}
	\prod_{j=1}^n\sum_{f_j\in\F_j}\mu(f_j)^r\le\prod_{j=1}^n\sum_{h_j\in\F_{n:j}}\mu(h_j)^r
\end{equation}
for any $r\in(-\infty,0)$. 

Similarly, let now $\F_1,\dots,\F_n$ be any finite subsets of the lattice $\L$. Then 
\begin{equation*}
	\prod_{j=1}^n\sum_{f_j\in\F_j}\inf f_j\le\prod_{j=1}^n\sum_{h_j\in\F_{n:j}}\inf h_j. 
\end{equation*}
\end{corollary}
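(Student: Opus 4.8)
The plan is to obtain Corollary~\ref{cor:AhKe} as a direct specialization of \cite[Theorem~1.1]{aharoni-keich}, quoted just above, feeding in as its hypothesis the inequalities already established in Corollaries~\ref{cor:1} and \ref{cor:max,min}.

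For the first inequality, I would apply \cite[Theorem~1.1]{aharoni-keich} with $L=\L_\Si$ and with $\al_j$ and $\be_j$, for every $j\in[n]$, all taken to be one and the same function $\L_\Si\ni h\mapsto\mu(h)^r\in[0,\infty]$, where $r\in(-\infty,0)$ is fixed and we use the conventions $0^r:=\infty$, $\infty^r:=0$, $0\cdot\infty:=\infty$ as in Corollary~\ref{cor:1}. With this choice, the hypothesis $\prod_{j=1}^n\al_j(f_j)\le\prod_{j=1}^n\be_j(f_{n:j})$ required in \cite[Theorem~1.1]{aharoni-keich} becomes $\prod_{j=1}^n\mu(f_j)^r\le\prod_{j=1}^n\mu(f_{n:j})^r$ for all $f_1,\dots,f_n\in\L_\Si$, which is exactly inequality \eqref{eq:p,r<0} of Corollary~\ref{cor:1} taken with $p=1$. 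The conclusion of \cite[Theorem~1.1]{aharoni-keich} is then precisely \eqref{eq:AhKe,r}. The second inequality of the corollary is obtained in the same manner, now with $L=\L$ and with all $\al_j=\be_j$ equal to the function $h\mapsto\inf h$; the corresponding hypothesis $\prod_{j=1}^n\inf f_j\le\prod_{j=1}^n\inf f_{n:j}$ is inequality \eqref{eq:inf} of Corollary~\ref{cor:max,min}.

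The only subtlety, already flagged in the paragraph preceding the statement, is that the ``order statistics'' in \cite{aharoni-keich} run in decreasing order, i.e., if $f^{n:j}$ denotes that version then $f^{n:j}=f_{n:\,n+1-j}$, so the set $F_{n:j}$ of \cite{aharoni-keich} equals our $\F_{n:\,n+1-j}$. This causes no difficulty here because all the $\al_j$ coincide and all the $\be_j$ coincide: both sides of the hypothesis and of the conclusion are then invariant under permuting the order statistics, so after the reindexing $j\mapsto n+1-j$ one recovers exactly the expressions in the statement of Corollary~\ref{cor:AhKe}. A second, purely routine, point is to check that the extended values $0$ and $\infty$ are handled coherently — the conventions under which \eqref{eq:p,r<0} and \eqref{eq:inf} hold are exactly the ones making the product-of-sums inequality of \cite{aharoni-keich} meaningful — but, the sets $\F_1,\dots,\F_n$ being finite, this is immediate (for instance, if the left-hand side is $\infty$ one checks directly that so is the right-hand side). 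I expect this bookkeeping to be the only, and a very minor, obstacle; the substance of the corollary resides entirely in Corollaries~\ref{cor:1} and \ref{cor:max,min} and in the cited theorem of \cite{aharoni-keich}.
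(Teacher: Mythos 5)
Your proposal is correct and follows exactly the route the paper intends: the paper derives this corollary "immediately" from the quoted Aharoni--Keich theorem with all $\al_j=\be_j$ equal to $h\mapsto\mu(h)^r$ (resp.\ $h\mapsto\inf h$), the hypothesis being supplied by inequality \eqref{eq:p,r<0} with $p=1$ (resp.\ \eqref{eq:inf}). Your additional remarks on the descending-order convention of \cite{aharoni-keich} and on the extended-value conventions are sound and, if anything, more careful than the paper's one-line derivation.
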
 



Comparing inequalities \eqref{eq:p,r>0} and \eqref{eq:p,r<0} in Corollary~\ref{cor:1} or inequalities \eqref{eq:sup} and \eqref{eq:inf} in Corollary~\ref{cor:max,min}, 
one may wonder whether 
the FKG-type inequalities stated in Corollaries~\ref{cor:fkg} and \ref{cor:AhKe} for the functions $h\mapsto\mu(h)^r$ with $r<0$ and $h\mapsto\inf h$ admit of the corresponding reverse analogues for the functions $h\mapsto\mu(h)^r$ with $r>0$ and $h\mapsto\sup h$. 
However, it is not hard to see that such FKG-type inequalities are not reversible in this sense, a reason being that the sets $\F_{n:j}$ may be much larger than the sets $\F_j$. 

E.g., suppose that $n=2$, $S=\R$, $\mu$ is a Borel probability measure on $\R$, $0<\vp<\de<1$, $N$ is a natural number, $\F_1$ is the set of $N$ pairwise distinct constant functions $f_1,\dots,f_N$ on $\R$ such that $1-\vp<f_j<1+\vp$ for all $j\in[n]$, 
and $\F_2=\{g_1,\dots,g_N\}$, where $g_j:=(1-\de)\one_{(-\infty,j]}+(1+\de)\one_{(j,\infty)}$ and $\one_A$ denotes the indicator of a set $A$. 
Then it is easy to see that each of the sets $\F_{2:1}$ and $\F_{2:2}$ is of cardinality $N^2$. So, letting $\de\downarrow0$ (so that $\vp\downarrow0$ as well), we see that, for any real $r$, the right-hand side of \eqref{eq:AhKe,r} goes to $N^4$ whereas its left-hand side goes to $N^2$, which is much less than $N^4$ if $N$ is large.


\begin{example}\label{ex:multiadd-sets}
Closely related to Example \ref{ex:multilin} is as follows. Suppose that $(S,\Si)$ is a measurable space, $\mu$ is a measure on the product $\si$-algebra $\Si^{\otimes k}$, and $L$ is a subring of $\Si$. Then $L$ is complementable and 
%
%
the function $m\colon L^k\to\R$ given by the formula 
\begin{equation}\label{eq:mu-of-prod}
	m(A_1,\dots,A_k):=\mu(A_1\times\dots\times A_k) 
\end{equation}
for $(A_1,\dots,A_k)\in L^k$ is multiadditive. 

A particular case of formula \eqref{eq:mu-of-prod} is 
\begin{equation}\label{eq:card-of-prod}
	m(A_1,\dots,A_k):=\card\big(G\cap(A_1\times\dots\times A_k)\big),  
\end{equation}
where $\card$ stands for the cardinality and $G$ is an arbitrary subset of $S^k$. 
If $G$ is symmetric in the sense that $(s_1,\dots,s_k)\in G$ iff $(s_{\pi(1)},\dots,s_{\pi(k)})\in G$ for all permutations $\pi$ of the set $[k]$, then $G$ represents the set (say $E$) of all hyperedges of a $k$-uniform hypergraph over $S$, in the sense that $(s_1,\dots,s_k)\in G$ iff $\{s_1,\dots,s_k\}\in E$. 
\end{example}

We now have another immediate corollary of Proposition~\ref{prop:multiadd}: 

\begin{corollary}\label{cor:sets}
Suppose that $k$ and $n$ are natural numbers such that $k\le n$, $(S,\Si)$ is a measurable space, $\mu$ is a measure on the product $\si$-algebra $\Si^{\otimes k}$, and $L$ is a subring of $\Si$. Then
\begin{equation}\label{eq:sets}
	\sum_{\pi\in\Pi_k^{[n]}}\mu(A_{n\col\pi(1)}\times\dots\times A_{n\col\pi(k)})
	\le \sum_{\pi\in\Pi_k^{[n]}}\mu(A_{\pi(1)}\times\dots\times A_{\pi(k)}) 
\end{equation}
for all $(A_1,\dots,A_n)\in L^n$. 
\end{corollary}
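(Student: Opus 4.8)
The plan is to instantiate Proposition~\ref{prop:multiadd} with the multiadditive function singled out in Example~\ref{ex:multiadd-sets}, namely $m\colon L^k\to[0,\infty]$ given by $m(A_1,\dots,A_k):=\mu(A_1\times\dots\times A_k)$. First I would check that $m$ meets the hypotheses of Proposition~\ref{prop:multiadd}. Identifying each member of $L$ with its indicator function embeds $L$ into $\R^S$; since $L$ is a subring of $\Si$, for all $A,B\in L$ the set $A\sm B$ again lies in $L$ and, under this identification, equals $\one_A-\one_A\we\one_B$, so $L$ is complementable. Nonnegativity of $m$ is immediate. For multiadditivity, fix $j\in[k]$ and sets $A_i\in L$ for $i\ne j$; for any $A_j,A_j'\in L$ the product sets $(A_j\we A_j')\times\prod_{i\ne j}A_i$ and $(A_j\sm A_j')\times\prod_{i\ne j}A_i$ are disjoint with union $A_1\times\dots\times A_k$, so finite additivity of $\mu$ gives $m(\dots,A_j,\dots)=m(\dots,A_j\we A_j',\dots)+m(\dots,A_j\sm A_j',\dots)$; hence $A_j\mapsto m(A_1,\dots,A_k)$ is additive for each $j$, i.e.\ $m$ is multiadditive. (All of this is the content of Example~\ref{ex:multiadd-sets}.)

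Next I would apply Proposition~\ref{prop:multiadd}, together with Theorem~\ref{th:}, to conclude that the function $\La_m$ of \eqref{eq:La_m} is generalized $n$-submodular, that is,
$$
\La_m(A_1,\dots,A_n)\ge\La_m(A_{n:1},\dots,A_{n:n})
$$
for all $(A_1,\dots,A_n)\in L^n$, where the ``order statistics'' $A_{n:1},\dots,A_{n:n}$ are computed in the lattice $L$ (equivalently, by Remark~\ref{rem:pointwise}, $A_{n:j}$ is the set of points of $S$ lying in at least $n+1-j$ of the sets $A_1,\dots,A_n$). Unwinding the definition \eqref{eq:La_m} of $\La_m$ — with $f_i:=A_i$ on the left-hand side and $f_i:=A_{n:i}$ on the right-hand side, and using $m(A_{\pi(1)},\dots,A_{\pi(k)})=\mu(A_{\pi(1)}\times\dots\times A_{\pi(k)})$ — turns the displayed inequality into exactly \eqref{eq:sets}.

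There is essentially no obstacle beyond bookkeeping; the single point worth a comment is that $m$ is $[0,\infty]$-valued rather than $\R$-valued as literally assumed in Proposition~\ref{prop:multiadd}. This is harmless: the proof of Proposition~\ref{prop:multiadd} uses only addition and the relation ``$\ge$'', both of which extend verbatim to $[0,\infty]$ under the convention $a+\infty:=\infty$, so the proposition applies unchanged to $[0,\infty]$-valued multiadditive functions. (Alternatively, when $\mu$ is $\si$-finite one may reduce to the finite-valued case by exhausting $S^k$ with sets of finite $\mu$-measure and passing to the limit in \eqref{eq:sets}, both sides being finite sums of the corresponding increasing $m$-terms; but this detour is unnecessary.)
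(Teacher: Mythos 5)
Your proof is correct and follows exactly the route the paper intends: the paper presents Corollary~\ref{cor:sets} as an immediate consequence of Proposition~\ref{prop:multiadd} applied to the multiadditive function $m(A_1,\dots,A_k)=\mu(A_1\times\dots\times A_k)$ of Example~\ref{ex:multiadd-sets}, which is precisely your argument. Your added remark on the possible $[0,\infty]$-valuedness of $m$ is a reasonable point of care that the paper glosses over, and your resolution of it is sound.
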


\section{Proofs
}\label{proof} 

\bigskip

One may note that formula \eqref{eq:g_k} in the proof of Theorem~\ref{th:} below defines a step similar to a step in the process of the so-called insertion search (cf.\ e.g.\ \cite[Section~5.2.1]{knuth-vol3} (also called the sifting or sinking technique) -- except that here we do the pointwise comparison of functions (rather than numbers) and therefore we do not stop when the right place of the value $f_{n+1}(s)$ of the ``new'' function $f_{n+1}$ among the already ordered values $f_{n:1}(s),\dots,f_{n:n}(s)$ at a particular point $s\in S$ has been found, because this place will in general depend on $s$. So, the proof that \eqref{eq:g_k} implies \eqref{eq:ident} may be considered as (something a bit more than) a rigorous proof of the validity of the insertion search algorithm, avoiding such informal, undefined terms as swap, moving, and interleaving. 

\begin{proof}[Proof of Theorem~\ref{th:}]
Let us prove the theorem by induction in $n$. For $n=1$, the result is trivial. To make the induction step, it suffices to prove the following: For any natural $n\ge2$, if the function $\La\colon L^n\to\RR$ is generalized $(n\col2)$-submodular and the function $L^{n-1}\ni(f_{1},\dots,f_{n-1})\mapsto\La(f_1,\dots,f_n)$ is generalized $(n-1)$-submodular for each $f_n\in L$, then $\La$ is generalized $n$-sub\-modular. 
Thus, we are assuming that the function $\La\colon L^n\to\RR$ is generalized $(n\col2)$-submodular and 
\begin{equation}\label{eq:induct}
	\La(f_1,\dots,f_n)\l\La(f_{n-1:1},\dots,f_{n-1:n-1},f_n)
\end{equation}
for all $(f_1,\dots,f_n)\in L^n$, where $f_{n-1:1},\dots,f_{n-1:n-1}$ are the ``order statistics'' based on $(f_1,\dots,f_{n-1})$. 

Take indeed any $(f_1,\dots,f_n)\in L^n$. 
Define the rectangular array of functions $(g_{k,j}\colon k\in\intr0{n-1}, j\in[n])$ recursively, as follows: 
\begin{equation}\label{eq:g_0}
	(g_{0,1},\dots,g_{0,n-1},g_{0,n}):=(f_{n-1:1},\dots,f_{n-1:n-1},f_n) 
\end{equation}
and, for $k\in\intr1{n-1}$ and $j\in[n]$,  
\begin{equation}\label{eq:g_k}
	g_{k,j}:=
\left\{
\begin{alignedat}{2}
& g_{k-1,j} && \text{\ \ if\ \  }j\in\intr1{n-k-1}\,\cup\,\intr{n-k+2}n, \\ 
& g_{k-1,n-k}\wedge g_{k-1,n-k+1} && \text{\ \ if\ \  }j=n-k, \\ 
& g_{k-1,n-k}\vee g_{k-1,n-k+1} && \text{\ \ if\ \  }j=n-k+1.   
\end{alignedat}
\right.	
\end{equation}
By the \eqref{eq:induct} 
and \eqref{eq:g_0}, 
\begin{equation}\label{eq:>(0)}
	\La(f_1,\dots,f_n)\l\La(g_{0,1},\dots,g_{0,n-1},g_{0,n}).   
\end{equation}
Moreover, for each $k\in\intr1{n-1}$,  
\begin{equation}\label{eq:(k-1)>(k)}
	\La(g_{k-1,1},\dots,g_{k-1,n})
	\l\La(g_{k,1},\dots,g_{k,n}), 
\end{equation}
since $\La$ is generalized $(n\col2)$-submodular.  

It follows from \eqref{eq:>(0)} and \eqref{eq:(k-1)>(k)} that 
\begin{equation*}
	\La(f_1,\dots,f_n)\l
	\La(g_{n-1,1},\dots,g_{n-1,n}).   
\end{equation*}
It remains to verify the identity  
\begin{equation}\label{eq:ident}
	(g_{n-1,1},\dots,g_{n-1,n})\overset{\text{(?)}}=(f_{n:1},\dots,f_{n:n}).   
\end{equation} 

In accordance with Remark~\ref{rem:pointwise}, we may and shall assume that the distributive lattice $L$ is a lattice of nonnegative real-valued functions on a set $S$, so that \eqref{eq:ord} holds 
for each $s\in S$. 

In the remainder of the proof, fix any $s\in S$. Then   
\begin{equation*}
	\{\{g_{0,1}(s),\dots,g_{0,n}(s)\}\}=\{\{f_1(s),\dots,f_{n}(s)\}\},   
\end{equation*}
by \eqref{eq:g_0} and the second part of \eqref{eq:ord} used with $n-1$ in place of $n$;   
also, for each $k\in\intr1{n-1}$, 
\begin{equation*}
	\{\{g_{k,1}(s),\dots,g_{k,n}(s)\}=\{\{g_{k-1,1}(s),\dots,g_{k-1,n}(s)\}\},  
\end{equation*}
by \eqref{eq:g_k}. So, 
\begin{equation*}
	\{\{g_{n-1,1}(s),\dots,g_{n-1,n}(s)\}\}=\{\{f_1(s),\dots,f_n(s)\}\}.  
\end{equation*}

Therefore, to complete the proof of \eqref{eq:ident} and thus that of Theorem~\ref{th:}, it remains to show that 
\begin{equation}\label{eq:g incr}
	g_{n-1,1}(s)\overset{\text{(?)}}\le\cdots\overset{\text{(?)}}\le g_{n-1,n}(s),     
\end{equation}
which will follow immediately from 

\begin{lemma}\label{lem:}
For each $k\in\intr1{n-1}$, 
the following assertion is true for all $s\in S$: 
\begin{equation*}\tag{$A_k$}
\begin{gathered}
	g_{k,j}(s)\le g_{k,j+1}(s)\text{ for all }j\in\intr1{n-k-2}\,\cup\,\intr{n-k}{n-1}; \\ 
	\text{also, $g_{k,n-k-1}(s)\le g_{k,n-k+1}(s)$ if $k\le n-2$.} 
\end{gathered}
\end{equation*}
\end{lemma}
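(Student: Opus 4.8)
The plan is to fix $s\in S$ and prove assertion $(A_k)$ by induction on $k\in\intr1{n-1}$. The recursion \eqref{eq:g_k} acts pointwise, so at $s$ we are simply sorting the finite list $(g_{0,1}(s),\dots,g_{0,n}(s))$ of real numbers by one adjacent comparison at a time, step $k$ being the comparison of positions $n-k$ and $n-k+1$. The content of $(A_k)$ is that after step $k$ the list is nondecreasing on the block of positions $\intr1{n-k-1}$ and, separately, on the block $\intr{n-k}n$, and that — crucially for closing the induction — one also keeps track of the cross inequality $g_{k,n-k-1}(s)\le g_{k,n-k+1}(s)$ bridging the two blocks over the ``gap'' at position $n-k$.

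For the base case $k=1$ I would use \eqref{eq:g_0} together with the first part of \eqref{eq:ord} (applied with $n-1$ in place of $n$) to get $g_{0,1}(s)\le\dots\le g_{0,n-1}(s)$, with $g_{0,n}(s)=f_n(s)$ arbitrary. Since step $1$ only replaces the pair at positions $n-1,n$ by its $\we$ and its $\vee$ and leaves positions $\intr1{n-2}$ untouched, the inequalities $g_{1,j}(s)\le g_{1,j+1}(s)$ for $j\in\intr1{n-3}$ are inherited from $g_0$; the inequality at $j=n-1$ is ``$a\we b\le a\vee b$''; and the cross inequality reads $g_{1,n-2}(s)=g_{0,n-2}(s)\le g_{0,n-1}(s)\le g_{0,n-1}(s)\vee g_{0,n}(s)=g_{1,n}(s)$.

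For the induction step I would assume $(A_k)$ with $1\le k\le n-2$ and deduce $(A_{k+1})$. Step $k+1$ replaces the pair at positions $n-k-1,n-k$ by $g_{k,n-k-1}\we g_{k,n-k}$ and $g_{k,n-k-1}\vee g_{k,n-k}$ and fixes everything else, so: the inequalities inside the shortened first block $\intr1{n-k-2}$ and inside the untouched tail $\intr{n-k+1}n$ come directly from $(A_k)$; the new adjacent inequality at position $n-k-1$ is again ``$a\we b\le a\vee b$''; the cross inequality demanded by $(A_{k+1})$ follows from $g_{k+1,n-k-2}(s)=g_{k,n-k-2}(s)\le g_{k,n-k-1}(s)\le g_{k,n-k-1}(s)\vee g_{k,n-k}(s)=g_{k+1,n-k}(s)$; and the only inequality that is not immediate, the one joining $g_{k+1,n-k}(s)=g_{k,n-k-1}(s)\vee g_{k,n-k}(s)$ to $g_{k+1,n-k+1}(s)=g_{k,n-k+1}(s)$, holds exactly because $(A_k)$ supplies both $g_{k,n-k}(s)\le g_{k,n-k+1}(s)$ and the cross inequality $g_{k,n-k-1}(s)\le g_{k,n-k+1}(s)$, whose join is then $\le g_{k,n-k+1}(s)$. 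Finally, $(A_{n-1})$ is exactly \eqref{eq:g incr}.

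The only real work will be the bookkeeping: keeping the three index ranges in \eqref{eq:g_k} aligned and handling the degenerate endpoints, e.g.\ when $n-k-1$, $n-k-2$, or $n-k+1$ falls off the ends of $\intr1n$ (the cases $k\in\{1,n-2,n-1\}$), where some index blocks in $(A_k)$ are empty and the corresponding inequalities are vacuous. No lattice theory beyond $a\we b\le a,b\le a\vee b$ for reals is needed; the one genuinely necessary idea is to build the ``gap-bridging'' inequality $g_{k,n-k-1}(s)\le g_{k,n-k+1}(s)$ into the induction hypothesis, since without it one cannot verify that the two sorted blocks remain consistent after the next adjacent comparison.
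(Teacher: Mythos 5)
Your proposal is correct and follows essentially the same route as the paper's own proof: induction on $k$ with the ``gap-bridging'' inequality $g_{k,n-k-1}(s)\le g_{k,n-k+1}(s)$ carried along in the induction hypothesis, the base case read off from \eqref{eq:g_0} and \eqref{eq:ord}, and the only nontrivial adjacent inequality at the top of the join handled by combining the first and second clauses of the preceding assertion. The only difference is cosmetic (you step from $(A_k)$ to $(A_{k+1})$ rather than from $(A_{k-1})$ to $(A_k)$).
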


Indeed, \eqref{eq:g incr} is the first clause in assertion $(A_k)$ with $k=n-1$. 
Thus, what finally remains to prove Theorem~\ref{th:} is to present the following. 

\begin{proof}[Proof of Lemma~\ref{lem:}]
For simplicity, let us be dropping $(s)$ -- thus writing $g_{k,j},f_n,\dots$ in place of $g_{k,j}(s),f_n(s),\dots$. We shall prove Lemma~\ref{lem:} by induction in $k\in\intr1{n-1}$. Assertion $(A_1)$ means that $g_{1,1}\le \cdots\le g_{1,n-2}$, $g_{1,n-1}\le g_{1,n}$, and $g_{1,n-2}\le g_{1,n}$ if $1\le n-2$. So, in view of \eqref{eq:g_k} and \eqref{eq:g_0},  $(A_1)$ can be rewritten as follows: $f_{n-1:1}\le \cdots\le f_{n-1:n-2}$, $f_{n-1:n-1}\wedge f_n \le f_{n-1:n-1}\vee f_n$, and $f_{n-1:n-2} \le f_{n-1:n-1}\vee f_n$; all these inequalities are obvious. So, $(A_1)$ holds. 

Take now any $k\in\intr2{n-1}$ and suppose that $(A_{k-1})$ holds. We need to show that then $(A_{k})$ holds. 

For all $j\in\intr1{n-k-2}\,\cup\,\intr{n-k+2}{n-1}$, we have $j+1\in\intr1{n-k-1}
\cup\,\intr{n-k+2}n$, whence, by \eqref{eq:g_k} and the first clause of $(A_{k-1})$,  
$g_{k,j}=g_{k-1,j}\le g_{k-1,j+1}=g_{k,j+1}$. So, 
\begin{equation}\label{eq:g<g,induct}
	g_{k,j}\le g_{k,j+1}\quad\text{for }j\in\intr1{n-k-2}\,\cup\,\intr{n-k+2}{n-1}. 
\end{equation}

If $j=n-k$\, then, by \eqref{eq:g_k}, $g_{k,j}=g_{k-1,n-k}\wedge g_{k-1,n-k+1}\le g_{k-1,n-k}\vee g_{k-1,n-k+1}=g_{k,j+1}$.

If $j=n-k+1$ then the condition $k\in\intr2{n-1}$ implies $j\le n-1$, and so,  
by \eqref{eq:g_k} and the second and first clauses of $(A_{k-1})$, $g_{k,j}=g_{k-1,n-k}\vee g_{k-1,n-k+1}
\le g_{k-1,n-k+2}=g_{k-1,j+1}=g_{k,j+1}$. 

Thus, in view of \eqref{eq:g<g,induct}, the first clause of $(A_{k})$ holds. 
Also, if $k\le n-2$ then, by \eqref{eq:g_k} and the first clause of $(A_{k-1})$, 
$g_{k,n-k-1}=g_{k-1,n-k-1}\le g_{k-1,n-k}\le g_{k-1,n-k}\vee g_{k-1,n-k+1}=g_{k,n-k+1}$, so that the second clause of $(A_{k})$ holds as well. 
This completes the proof of Lemma~\ref{lem:}. 
\end{proof}

Thus, Theorem~\ref{th:} is proved. 
\end{proof}

\begin{proof}[Proof of Proposition~\ref{prop:schur}]
Take any $(f_1,\dots,f_n)\in L^n$. Corollary~B.3 in \cite{marsh-ol09} states that $x\prec y$ iff $x$ is in the convex hull of the set of all points obtained by permuting the coordinates of the vector $y$. 
Also, since the function $\la$ is nondecreasing, we have $\la(f_1\vee f_2)\ge\la(f_1)\vee\la(f_2)$. 
For any real $a,b,c$ such that $c\ge a\vee b$, we have $(a,b)=(1-t)(a+b-c,c)+t(c,a+b-c)$ for  $t=\frac{c-b}{2c-a-b}\in[0,1]$ if $c>(a+b)/2$ and for any $t\in[0,1]$ otherwise (that is, if $a=b=c$). 
So, the point $(a,b)$ is a convex combination of points $(a+b-c,c)$ and $(c,a+b-c)$. Using this fact for $a=\la(f_1)$, $b=\la(f_2)$, $c=\la(f_1\vee f_2)$, we see that 
\begin{equation*}
(\la(f_1),\dots,\la(f_n))
\prec(\la(f_1)+\la(f_2)-\la(f_1\vee f_2),\la(f_1\vee f_2),\la(f_3),\dots,\la(f_n)). 
\end{equation*}
Also, $\la(f_1\we f_2)\le\la(f_1)+\la(f_2)-\la(f_1\vee f_2)$, by the submodularity of $\la$. 
Therefore and because $F$ is nondecreasing (in each of its $n$ arguments) and Schur-concave, we conclude that  
\begin{multline*}
	F(\la(f_1\we f_2),\la(f_1\vee f_2),\la(f_3),\dots,\la(f_n)) \\ 
	\le F(\la(f_1)+\la(f_2)-\la(f_1\vee f_2),\la(f_1\vee f_2),\la(f_3),\dots,\la(f_n)) \\ 
	\le F(\la(f_1),\dots,\la(f_n)). 
\end{multline*}
Quite similarly, 
\begin{multline*}
	F(\la(f_1),\dots,\la(f_{i-1}),\la(f_i\we f_{i+1}),\la(f_i\vee f_{i+1}),\la(f_{i+2}),\dots,\la(f_n)) \\ 
	\le F(\la(f_1),\dots,\la(f_n))
\end{multline*}
for all $i\in\intr1{n-1}$, so that the function $F$ is indeed generalized $(n\col2)$-submodular and hence, by Theorem~\ref{th:}, generalized $n$-submodular. 
\end{proof}

\begin{proof}[Proof of Proposition~\ref{prop:potential}]
In view of Theorem~\ref{th:}, it is enough to show that the function $\La=\La_{\vpi,\psi}$ is generalized $(n\col2)$-submodular. 
Without loss of generality (w.l.o.g.), we may and shall assume that the function $\vpi$ is nondecreasing, since $\La_{\vpi^-,\psi}=\La_{\vpi,\psi}$, where $\vpi^-(u):=\vpi(-u)$ for all real $u$. 
Also, w.l.o.g.\ $\psi(0)=0$ and hence $\Psi(0)=0$. 


Take any $f=(f_1,\dots,f_n)\in L^n$. Then, letting 
\begin{equation}\label{eq:tPsi}
	\tPsi(g):=\Psi(g)+\Psi(-g)
\end{equation}
for $g\in L$, one has 
\begin{equation}
	\La(f_1,f_2,f_3,\dots,f_n) 
	=\tPsi(f_1-f_2)+\sum_{j=3}^n\big(\tPsi(g_j)+\tPsi(h_j)\big)+R, \label{eq:La=} 
\end{equation}
{where $g_j:=f_1-f_j$, $h_j:=f_2-f_j$, and $R:=\sum_{3\le j<k\le n}^n\tPsi(f_j-f_k)$. 
Since $f_1\wedge f_2-f_1\vee f_2=-|f_1-f_2|$, one similarly has
} 
\begin{equation}\label{eq:Laa=}
\begin{aligned}
	 \La(f_1\wedge f_2,f_1\vee f_2,f_3,\dots,f_n) 
	=&
\tPsi(|f_1-f_2|) 
+\sum_{j=3}^n\big(\tPsi(g_j\wedge h_j)+\tPsi(g_j\vee h_j)\big)+R. 
\end{aligned}	
\end{equation}
Next, 
\begin{align}
	\tPsi(f_1-f_2)&=\psi\Big(\int_S\vpi\circ(f_1-f_2)\dd\mu\Big)
	+\psi\Big(\int_S\vpi\circ(f_2-f_1)\dd\mu\Big), \label{eq:Psi12} \\ 
	\tPsi(|f_1-f_2|)&=\psi\Big(\int_S\vpi\circ|f_1-f_2|\dd\mu\Big)
	+\psi\Big(\int_S\vpi\circ(-|f_2-f_1|)\dd\mu\Big), \label{eq:Psi|12|} 
\end{align}
$\vpi\circ(f_1-f_2)+\vpi\circ(f_2-f_1)=\vpi\circ|f_1-f_2|+\vpi\circ(-|f_1-f_2|)$ and hence 
\begin{equation}\label{eq:ints=ints}
\int_S\vpi\circ(f_1-f_2)\dd\mu+\int_S\vpi\circ(f_2-f_1)\dd\mu= 
\int_S\vpi\circ|f_1-f_2|\dd\mu+\int_S\vpi\circ(-|f_2-f_1|)\dd\mu.  	
\end{equation}
Also, since $\vpi$ is nondecreasing, $\vpi\circ(f_1-f_2)\,\vee\,\vpi\circ(f_2-f_1)\le\vpi\circ|f_1-f_2|$ and hence 
\begin{equation}\label{eq:ints<int}
\int_S\vpi\circ(f_1-f_2)\dd\mu\;\vee\;\int_S\vpi\circ(f_2-f_1)\dd\mu\le 
\int_S\vpi\circ|f_1-f_2|\dd\mu. 
\end{equation} 
Since the function $\psi$ is convex, it follows from \eqref{eq:Psi12}, \eqref{eq:Psi|12|}, \eqref{eq:ints=ints}, and \eqref{eq:ints<int} that 
\begin{equation}\label{eq:12}
	\tPsi(f_1-f_2)\le\tPsi(|f_1-f_2|). 
\end{equation}

Further, take any $j\in\intr3n$. Then 
$\vpi\circ g_j+\vpi\circ h_j
=\vpi\circ(g_j\wedge h_j)+\vpi\circ(g_j\vee h_j)$. 
So, 
\begin{equation*}
\int_S(\vpi\circ g_j)\dd\mu+\int_S(\vpi\circ h_j)\dd\mu 
=\int_S\vpi\circ(g_j\wedge h_j)\dd\mu+\int_S\vpi\circ(g_j\vee h_j)\dd\mu. 	
\end{equation*}
Moreover, since $\vpi$ is nondecreasing, $\int_S\vpi\circ(g_j\vee h_j)\dd\mu$ is no less than each of the integrals 
$\int_S(\vpi\circ g_j)\dd\mu$ and $\int_S(\vpi\circ h_j)\dd\mu$. So, in view of \eqref{eq:Psi} and the convexity of the function $\psi$, one has 
$\Psi(g_j)+\Psi(h_j)\le\Psi(g_j\wedge h_j)+\Psi(g_j\vee h_j)$. 
Similarly, because $\int_S\vpi\circ(-(g_j\wedge h_j))\dd\mu$ is no less than each of the integrals 
$\int_S\vpi\circ(-g_j)\dd\mu$ and $\int_S\vpi\circ(-h_j)\dd\mu$, one has $\Psi(-g_j)+\Psi(-h_j)\le\Psi(-(g_j\wedge h_j))+\Psi(-(g_j\vee h_j))$. 
So, by \eqref{eq:tPsi}, 
$\tPsi(g_j)+\tPsi(h_j)\le\tPsi(g_j\wedge h_j)+\tPsi(g_j\vee h_j)$.  

Therefore, by \eqref{eq:La=},  \eqref{eq:Laa=}, and \eqref{eq:12},  
$\La(f_1,f_2,f_3,\dots,f_n)\le\La(f_1\wedge f_2,\break 
f_1\vee f_2,f_3,\dots,f_n)$. 
Similarly, $\La(f_1,\dots,f_{j-1},f_j,f_{j+1},f_{j+2},\dots,f_n)\le
\La(f_1,\dots,f_{j-1},\break 
f_j\wedge f_{j+1},f_j\vee f_{j+1},f_{j+2},\dots,f_n)$ for all $j\in\intr1{n-1}$. 

Thus, the function $\La$ is generalized $(n\col2)$-supermodular, and so, by Theorem~\ref{th:}, it is generalized $n$-supermodular. 
\end{proof}

\begin{proof}[Proof of Proposition~\ref{prop:multiadd}]
%
Fix any $(f_1,\dots,f_n)\in L^n$. Then, in view of the permutation symmetry of $\m$ defined by \eqref{eq:m bar},  
\begin{equation}\label{eq:La_m,f,g}
		\frac1{k!}\,\La_m(f_1,\dots,f_n)=\la_2(f_{n-1},f_n)+\la_1(f_{n-1})+\la_1(f_n)+\la_0,
\end{equation}	
where 
\begin{align*}
	\la_2(f,g)&:=\sum_{1\le i_1<\dots<i_{k-2}\le n-2}\m(f_{i_1},\dots,f_{i_{k-2}},f,g), \\ 
	\la_1(f)&:=\sum_{1\le i_1<\dots<i_{k-1}\le n-2}\m(f_{i_1},\dots,f_{i_{k-1}},f), \\ 
	\la_0&:=\sum_{1\le i_1<\dots<i_k\le n-2}\m(f_{i_1},\dots,f_{i_k}), \\ 
\end{align*}
Similarly, 
\begin{multline}\label{eq:La_m,we,vee}		 
		\frac1{k!}\,\La_m(f_1,\dots,f_{n-2},f_{n-1}\we f_n,f_{n-1}\vee f_n)	
		=\la_2(f_{n-1}\we f_n,f_{n-1}\vee f_n) \\ 
		+\la_1(f_{n-1}\we f_n)+\la_1(f_{n-1}\vee f_n)+\la_0.   
\end{multline}

Note that the function $\la_2\colon L^2\to\R$ is $2$-additive and permutation-symmetric, and the function $\la_1\colon L^2\to\R$ is additive. Take any $f$ and $g$ in $L$. Then $(f\vee g)\we f=f$ and 
$(f\vee g)\sm f=g\sm f$. So, by the additivity of $\la_1$ we have $\la_1(f\vee g)=\la_1(f)+\la_1(g\sm f)$, whereas $\la_1(f\we g)+\la_1(g\sm f)=\la_1(g)$. So, 
\begin{equation}\label{eq:la_1}
	\la_1(f\we g)+\la_1(f\vee g)
=\la_1(f\we g)+\la_1(f)+\la_1(g\sm f)
=	
	\la_1(f)+\la_1(g). 
\end{equation}
%
By the $2$-additivity and permutation symmetry of $\la_2$ and because the function $\la_2$ is $2$-additive, permutation-symmetric, and nonnegative, we have 
\begin{equation}\label{eq:la_2}
\begin{aligned}
	\la_2(f\we g,f\vee g)  
	&=\la_2(f\we g,f\sm g)+\la_2(f\we g,g) \\ 
	&=\la_2(f\we g,f\sm g)+\la_2(f,g)-\la_2(f\sm g,g) \\ 
	&=\la_2(f\we g,f\sm g)+\la_2(f,g)-\la_2(f\sm g,g\we f)-\la_2(f\sm g,g\sm f) \\ 
	&=\la_2(f,g)-\la_2(f\sm g,g\sm f) \\ 
	&\le\la_2(f,g). 
\end{aligned}	
\end{equation}
It follows from \eqref{eq:La_m,f,g}, \eqref{eq:La_m,we,vee}, \eqref{eq:la_1}, and \eqref{eq:la_2} (with $f=f_{n-1}$ and $g=f_n$) that 
\begin{equation*}
	\La_m(f_1,\dots,f_{n-2},f_{n-1}\we f_n,f_{n-1}\vee f_n)\le\La_m(f_1,\dots,f_n). 
\end{equation*}
Therefore, being permutation-symmetric, the function $\La_m$ is indeed generalized $(n\col 2)$-submodular. Hence, by Theorem~\ref{th:}, $\La_m$ is generalized $n$-submodular. 
\end{proof}

%

\bibliographystyle{abbrv}

\bibliography{P:/pCloudSync/mtu_pCloud_02-02-17/bib_files/
citations10.13.18a}

\end{document}